\documentclass[12pt]{article}
\usepackage[latin1]{inputenc}
\usepackage[english]{babel}
\usepackage{graphicx}
\usepackage{amsmath,amsfonts,amssymb,amsthm,amsopn,amscd}
\usepackage{bbm,wasysym,stmaryrd}
\usepackage{subfigure}
\usepackage{color}
\usepackage{setspace}
\usepackage{authblk}
\usepackage{hyperref}
\usepackage[normalem]{ulem}

\newcommand{\R}{\mathbbm{R}}

\newcommand{\B}{\mathcal{B}}

\newcommand{\F}{\mathcal{F}}
\newcommand{\G}{\mathcal{G}}
\newcommand{\Hh}{\mathcal{H}}

\newcommand{\Exp}{\mathbb{E}} 

\newcommand{\T}{\mathcal{T}}  

\newcommand{\n}{\mathfrak{n}}

\newcommand{\D}{\mathcal{D}}
\newcommand{\M}{\mathcal{M}}
\newcommand{\MM}{\mathfrak{M}}
\newcommand{\C}{\mathcal{C}}

\newcommand{\norm}[1]{\| #1 \|}
\newcommand{\lsup}[1]{\underset{#1\to\infty}{\overline{\lim}}}
\newcommand{\linf}[1]{\underset{#1\to\infty}{\underline{\lim}}}

\newcommand{\J}{\mathbf J}
\newcommand{\undt}[1]{\underline{#1}}

\theoremstyle{plain}
\newtheorem{theorem}{Theorem}
\newtheorem{corollary}[theorem]{Corollary}
\newtheorem{proposition}[theorem]{Proposition}
\newtheorem{lemma}[theorem]{Lemma}

\theoremstyle{definition}
\newtheorem{definition}{Definition}

\newenvironment{remark}{\par {\noindent \it \sc Remark.} \small \it } {}

\begin{document}
\title{On the form of the relative entropy between measures on the space of continuous functions}

\author[*]{James MacLaurin}
\author[*]{Olivier Faugeras}
\affil[*]{NeuroMathComp Laboratory, INRIA Sophia-Antipolis, 2004 route des Lucioles, B.P. 93, 06902 Sophia-Antipolis, France
  \hspace{2cm} {\tt \small james.maclaurin@inria.fr}}

\maketitle

\abstract{
In this paper we derive an integral (with respect to time) representation of the relative entropy (or Kullback-Leibler Divergence) $R(\mu || P)$, where $\mu$ and $P$ are measures on $C([0,T];\R^d)$. The underlying measure $P$ is a weak solution to a Martingale Problem with continuous coefficients. Since $R(\mu|| P)$ governs the exponential rate of convergence of the empirical measure (according to Sanov's Theorem), this representation is of use in the numerical and analytical investigation of finite-size effects in systems of interacting diffusions.}

\section{Introduction}

In this paper we derive an integral representation of the relative entropy $R(\mu || P)$, where $\mu$ is a measure on $C([0,T];\R^d)$ and $P$ governs the solution to a stochastic differential equation (SDE).  The relative entropy is used to quantify the distance between two measures. It has considerable applications in statistics, information theory and communications. It has been used in the long-time analysis of Fokker-Planck equations \cite{plastino-miller-etal:97,desvillettes-villani:01}, the analysis of dynamical systems \cite{yu-mehta:09} and the analysis of spectral density functions \cite{georgiou-lindquist:03}. It has been used in financial mathematics to quantify the difference between martingale measures \cite{fritelli:00,grandits-rheinlander:02}. The finiteness of $R(\mu|| P)$ has been shown to be equivalent to the invertibility of certain shifts on Wiener Space, when $P$ is the Wiener Measure \cite{ustunel:09,lassalle:12}.

One of the most important applications of the relative entropy is in the area of Large Deviations. Sanov's Theorem dictates that the empirical measure induced by independent samples governed by the same probability law $P$ converge towards their limit exponentially fast; and the constant governing the rate of convergence is the relative entropy \cite{dembo-zeitouni:97}. Large Deviations have been applied for example to spin glasses \cite{ben-arous-guionnet:95}, neural networks \cite{moynot-samuelides:02,faugeras-maclaurin:13b} and mean-field models of risk \cite{josselin-garnier-yang:13}. In the mean-field theory of neuroscience in particular, there has been a recent interest in the modelling of `finite-size-effects' \cite{baladron-fasoli-etal:11}. Large Deviations provides a mathematically rigorous tool to do this. However when $P$ governs a diffusion process the calculation of $R(\mu || P)$ for an arbitrary measure $\mu$ is not necessarily straightforward. In this context, the variational definition in Definition \ref{eq:relativeentropy} below is not straightforward to work with, and neither is the equivalent expression $E^{\mu}[\log\frac{d\mu}{dP}]$. The problem becomes particularly acute when one wishes to numerically calculate the rate function.

The classic paper by \cite{dawson-gartner:87} obtained a Large Deviations estimate for the empirical process corresponding to a system of diffusion processes, with coefficients themselves dependent on the empirical process. The empirical process in this paper is a map from the time $t$ to the empirical measure at time $t$. They used projective limits to determine a representation of the rate function as an integral (analogously to Freidlin and Wentzell \cite{freidlin-wentzell:98}) with respect to time.  This rate function is not the relative entropy because the authors' empirical process differs from the empirical measure. Various authors have extended this work. Most recently, \cite{budhiraja-dupuis-etal:12,fischer:12} have used weak convergence and stochastic optimal control techniques to obtain a Large Deviation Principle for the empirical measure corresponding to a similar system to \cite{dawson-gartner:87} (although in a more general form). This empirical measure is a measure on the space $C([0,T](\R^d))$ of continuous functions; it contains more information about the system than the empirical process of \cite{dawson-gartner:87}. The rate function is in variational form, being equal to the minimum of a cost functional over a set of control processes. If one assumes that the coefficients governing the diffusion process are independent of the empirical measure in this paper, then through Sanov's Theorem one may infer that the rate function is equal to the relative entropy: in other words, one may infer a variational representation of the relative entropy from these papers. It is also of interest to consult \cite{boue-dupuis:98}, who were the first to prove such a variational representation of the relative entropy in the case where the underlying process is a Wiener process.

In this paper we derive a specific integral (with respect to time) representation of the relative entropy $R(\mu|| P)$ when $P$ is a diffusion process. This $P$ is the same as in \cite[Section 4]{dawson-gartner:87}. The representation makes use of regular conditional probabilities. It ought, in many circumstances, to be more numerically tractable than the standard definition in \ref{eq:relativeentropy}, and thus it would be of practical use in the applications listed above.
\section{Outline of Main Result}

Let $\T$ be the Banach Space $C([0,T];\R^d)$ equipped with the norm
\begin{equation}
\norm{X} = \sup_{s\in [0,T]}\lbrace |X_s| \rbrace,
\end{equation}
where $\left| \cdot \right|$ is the standard Euclidean norm over $\R^d$. We let $(\F_t)$ be the canonical filtration over $(\T,\B(\T))$. For some topological space $\mathcal{X}$, we let $\mathcal{B}(\mathcal{X})$ be the Borelian $\sigma$-algebra and $\M(\mathcal{X})$ the space of all probability measures on $(\mathcal{X},\mathcal{B}(\mathcal{X}))$. Unless otherwise indicated, we endow $\M(\mathcal{X})$ with the topology of weak convergence. Let $\sigma = \lbrace t_1,t_2,\ldots,t_m \rbrace$ be a finite set of elements such that $t_1\geq 0$, $t_m \leq T$ and $t_{j} < t_{j+1}$.  We term $\sigma$ a partition, and denote the set of all such partitions by  $\J$. The set of all partitions of the above form such that $t_1 = 0$ and $t_m = T$ is denoted $\J_*$. For some $t\in [0,T]$, we define $\underline{\sigma}(t) = \sup\lbrace s\in \sigma | s\leq t\rbrace$. 

\begin{definition}\label{eq:relativeentropy}
Let $(\Omega,\Hh)$ be a measurable space, and $\mu$, $\nu$ probability measures. 
\begin{equation*}
R_{\Hh}(\mu||\nu) = \sup_{f\in \mathcal{E}}\left\lbrace \Exp^{\mu}[f] - \log\Exp^{\nu}[\exp(f)]\right\rbrace \in \R\cup\infty,
\end{equation*}
where $\mathcal{E}$ is the set of all bounded functions. If the $\sigma$-algebra is clear from the context, we omit the $\Hh$ and write $R(\mu||\nu)$. If $\Omega$ is Polish and $\Hh = \B(\Omega)$, then we only need to take the supremum over the set of all continuous bounded functions. 
\end{definition}
Let  $P\in \M(\T)$ be the following law governing a Markov-Feller diffusion process on $\T$.  $P$ is stipulated to be a weak solution (with respect to the canonical filtration) of the local martingale problem with infinitesimal generator
\begin{equation*}
\mathcal{L}_t(f) = \frac{1}{2} \sum_{1\leq j,k\leq d} a^{jk}(t,x)\frac{\partial^2 f}{\partial x^j x^k} + \sum_{1\leq j\leq d} b^j(t,x) \frac{\partial f}{\partial x^j},
\end{equation*}
for $f(x)$ in $C^2(\R^{d})$, i.e. the space of twice continuously differentiable functions. The initial condition (governing $P_0$) is $\mu_I \in \M(\R^d)$. The coefficients $a^{jk},b^j : [0,T]\times\R^d \to \R$ are assumed to be continuous (over $[0,T]\times\R^d$), and the matrix $a(t,x)$ is strictly positive definite for all $t$ and $x$. $P$ is assumed to be the unique weak solution. We note that the above infinitesimal generator is the same as in \cite[p 269]{dawson-gartner:87}  (note particularly Remark 4.4 in this paper). 

Our major result is the following. Let $\mu\in\M(\T)$ govern a random variable $X \in \T$. For some $x\in\T$, we note $\mu_{|[0,s],x}$, the regular conditional probability (rcp) given $X_r = x_r$ for all $r\in [0,s]$. The marginal of $\mu_{|[0,s],x}$ at some time $t\geq s$ is noted $\mu_{t | [0,s],x}$.
\begin{theorem}\label{thm:ratefunctionfinal}
For $\mu\in\M(\T)$, 
\begin{multline}
R\left(\mu ||P\right) =  R_{\F_0}(\mu || P) + \sup_{\sigma\in\J_*}E^{\mu(x)}\left[ \int_{0}^{T}\sup_{f\in\D}\bigg\lbrace \frac{\partial}{\partial t}E^{\mu_{t | [0,\underline{\sigma}(t)],x}}\left[f\right]\right. \\ \left.\left.-E^{\mu_{t|[0,\underline{\sigma}(t)],x}(y)} \left[\mathcal{L}_t f(y) + \frac{1}{2}\sum_{j,k=1}^d a^{jk}(t,y)\frac{\partial f}{\partial y^j}\frac{\partial f}{\partial y^k}\right]\right\rbrace dt\right].\label{eq:bigresult}
\end{multline}
Here $\D$ is the Schwartz Space of compactly supported functions $\R^{d} \to \R$, possessing continuous derivatives of all orders. If $\frac{\partial}{\partial t}E^{\mu_{t | [0,\underline{\sigma}(t)],x}}\left[f\right]$ does not exist, then we consider it to be $\infty$. 
\end{theorem}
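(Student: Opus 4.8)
The proof rests on two ingredients: an \emph{exact} additivity identity obtained from the chain rule for relative entropy together with the Markov--Feller property of $P$, and a \emph{single-interval lower bound} obtained by testing the variational formula of Definition~\ref{eq:relativeentropy} against Girsanov-type exponential martingales. Write $\mathrm{RHS}(\sigma)$ for the right-hand side of \eqref{eq:bigresult}; we must show $R(\mu||P)=\sup_{\sigma\in\J_*}\mathrm{RHS}(\sigma)$. Fixing $\sigma=\{0=t_1<\dots<t_m=T\}\in\J_*$ and applying the chain rule successively along $\F_{t_1}\subseteq\F_{t_2}\subseteq\dots\subseteq\F_{t_m}=\B(\T)$ — using that, for the canonical filtration, conditioning on $\F_{t_i}$ is conditioning on the path up to $t_i$, and that the $P$-law of $(X_t)_{t\in[t_i,t_{i+1}]}$ given $\F_{t_i}$ depends only on $X_{t_i}$ — one obtains the telescoping identity
\begin{equation}\label{eq:chainrule}
R(\mu||P)=R_{\F_0}(\mu||P)+E^{\mu(x)}\Big[\,\textstyle\sum_{i=1}^{m-1}R_{[t_i,t_{i+1}]}\big(\mu_{|[0,t_i],x}\,\big|\big|\,P^{t_i,x_{t_i}}\big)\Big],
\end{equation}
where $R_{[s,u]}$ is relative entropy on $C([s,u];\R^d)$ and $P^{s,z}$ is the diffusion on $[s,u]$ started from $z$ with generator $\mathcal{L}$. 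Since $\underline{\sigma}(t)=t_i$ on $[t_i,t_{i+1})$, $\mathrm{RHS}(\sigma)=R_{\F_0}(\mu||P)+E^{\mu(x)}[\textstyle\sum_i\int_{t_i}^{t_{i+1}}\sup_{f\in\D}h^{\,\mu_{|[0,t_i],x}}(t,f)\,dt]$, where $h^{\nu}(t,f):=\tfrac{\partial}{\partial t}E^{\nu_t}[f]-E^{\nu_t(y)}\big[\mathcal{L}_tf(y)+\tfrac12\sum_{j,k}a^{jk}(t,y)\partial_{y^j}f\,\partial_{y^k}f\big]$; comparison with \eqref{eq:chainrule} reduces the theorem to understanding, interval by interval, the relation between $R_{[s,u]}(\nu||Q)$ and $\int_s^u\sup_{f\in\D}h^{\nu}(t,f)\,dt$ (with $Q=P^{s,z}$ and $\nu_s=\delta_z$).

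For $R(\mu||P)\ge\sup_\sigma\mathrm{RHS}(\sigma)$ it suffices to prove the single-interval bound $R_{[s,u]}(\nu||Q)\ge\int_s^u\sup_{f\in\D}h^{\nu}(t,f)\,dt$. Given a sub-partition $s=\tau_0<\dots<\tau_n=u$ and $f_0,\dots,f_{n-1}\in\D$, put $|\nabla f|^2_a:=\sum_{j,k}a^{jk}\partial_{y^j}f\,\partial_{y^k}f$ and
\[
G(\omega)=\sum_{j=0}^{n-1}\Big[f_j(\omega_{\tau_{j+1}})-f_j(\omega_{\tau_j})-\int_{\tau_j}^{\tau_{j+1}}\big(\mathcal{L}_tf_j+\tfrac12|\nabla f_j|^2_a\big)(t,\omega_t)\,dt\Big].
\]
Since each $f_j$ is compactly supported and $a,b$ are continuous, $G$ is a bounded continuous functional, and by It\^o's formula under $Q$ its $j$-th summand equals $M^{(j)}_{\tau_{j+1}}-\tfrac12\langle M^{(j)}\rangle_{\tau_{j+1}}$ for the genuine martingale $M^{(j)}_t=\int_{\tau_j}^t\nabla f_j^{\top}\sqrt{a}\,dW$; successive conditioning on the $\F_{\tau_j}$ gives $E^Q[e^{G}]=1$, so Definition~\ref{eq:relativeentropy} yields $R_{[s,u]}(\nu||Q)\ge E^{\nu}[G]=\sum_j\int_{\tau_j}^{\tau_{j+1}}h^{\nu}(t,f_j)\,dt$ — using, where needed, that $t\mapsto E^{\nu_t}[f_j]$ is absolutely continuous, which holds whenever $R_{[s,u]}(\nu||Q)<\infty$ and otherwise makes the integrand $+\infty$ by the stated convention. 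Optimising over $f_0,\dots,f_{n-1}$ and over the sub-partition, and using a simple-function/measurable-selection approximation of the near-optimal map $t\mapsto f_t$ (it is enough to let the $f_j$ range over a countable dense subset of $\D$), produces the single-interval bound. Inserting it into \eqref{eq:chainrule}, after a parallel measurable selection in the conditioning path $x_{|[0,t_i]}$ (legitimate since $P$ is Feller, so $z\mapsto P^{t_i,z}$ and the attendant functionals are measurable and glue into a single bounded $G$ on $\T$ with $E^P[e^{G}]=1$), gives $R(\mu||P)\ge\mathrm{RHS}(\sigma)$ for every $\sigma\in\J_*$.

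For the reverse inequality, \eqref{eq:chainrule} recasts it as: the total gap $E^{\mu(x)}\big[\sum_i\big(R_{[t_i,t_{i+1}]}(\mu_{|[0,t_i],x}||P^{t_i,x_{t_i}})-\int_{t_i}^{t_{i+1}}\sup_{f\in\D}h^{\,\mu_{|[0,t_i],x}}(t,f)\,dt\big)\big]\to0$ as the mesh of $\sigma\to0$. If $R(\mu||P)<\infty$, the conditional measures are $\mu$-a.s.\ absolutely continuous and, by Girsanov's theorem, carry a progressively measurable drift $v_t$ (intrinsic to $\mu$, hence consistent across conditionings) with $R_{[t_i,t_{i+1}]}(\mu_{|[0,t_i],x}||P^{t_i,x_{t_i}})=\tfrac12E^{\mu_{|[0,t_i],x}}[\int_{t_i}^{t_{i+1}}v_t^{\top}a^{-1}v_t\,dt]$; on the other hand, from the Fokker--Planck equation for $t\mapsto\nu_t$ one computes $\sup_{f\in\D}h^{\nu}(t,f)=\tfrac12E^{\nu_t}[\hat v_t^{\top}a^{-1}\hat v_t]-\tfrac12\,\mathrm{def}_t$, where $\hat v_t(y)=E^{\nu}[v_t\mid X_t=y]$ and $\mathrm{def}_t=\inf_{f\in\D}\|\nabla f-a^{-1}\hat v_t\|^2_{L^2(\nu_t;\,a\,dy)}\ge0$ measures the failure of $a^{-1}\hat v_t$ to be a gradient. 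Hence the total gap equals $\tfrac12\int_0^TE^{\mu}\big[v_t^{\top}a^{-1}v_t-\big|E^{\mu}[v_t\mid\F_{\underline{\sigma}(t)},X_t]\big|^2_{a^{-1}}+\mathrm{def}_t\big]\,dt$, and each contribution vanishes as the mesh $\to0$: the first by $L^2$-martingale convergence, $E^{\mu}[v_t\mid\F_{\underline{\sigma}(t)},X_t]\to E^{\mu}[v_t\mid\F_{t-},X_t]=v_t$ (since $\F_{t-}=\F_t$ for continuous paths); the second because in the same limit $\hat v_t$ depends only on the path up to time $t$ and hence, given that path, is \emph{constant} in $y$, so approximable in $L^2(\nu_t)$ by gradients of Schwartz functions on the (tight) support of $\nu_t$. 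Dominated convergence with the $\mu$-integrable bound $v_t^{\top}a^{-1}v_t$ then closes the case $R(\mu||P)<\infty$; the case $R(\mu||P)=\infty$ follows by combining \eqref{eq:chainrule}, the identity $R(\mu||P)=\sup_{\sigma}R_{\F_\sigma}(\mu||P)$ (with $\F_\sigma$ generated by $(X_t)_{t\in\sigma}$), the lower bound already proved, and the $+\infty$ convention on partitions for which some conditional relative entropy is infinite on a set of positive $\mu$-measure.

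The step I expect to be most delicate is the single-interval lower bound together with its two measurable selections (in $t$ and in the conditioning path), and — for the reverse inequality — justifying the interchange of the mesh limit with the two expectations and the supremum over $f$ when $a^{-1}$ is only continuous rather than globally bounded. The conceptual heart, however, is the identity $\sup_{f\in\D}h^{\nu}(t,f)=\tfrac12E^{\nu_t}[\hat v_t^{\top}a^{-1}\hat v_t]-\tfrac12\mathrm{def}_t$ together with the observation that conditioning on an ever-finer past turns the effective drift $\hat v_t$ into a gradient in the remaining spatial variable: this is exactly why the supremum over partitions appears in \eqref{eq:bigresult}, and why it recovers the full path-space relative entropy.
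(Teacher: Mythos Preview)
Your lower bound $R(\mu||P)\ge\mathrm{RHS}(\sigma)$ is sound and close in spirit to the paper's: the exponential-martingale test functions with $E^Q[e^G]=1$ are exactly what underlie the Dawson--G\"artner inequality \eqref{eq:DG1}, which the paper simply quotes (together with the time-dependent variational identity \eqref{eq:DG3}, which absorbs your measurable-selection step in $t$).

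The reverse inequality is where your argument both diverges from the paper's and acquires a genuine gap. You try to show that the defect $R(\mu||P)-\mathrm{RHS}(\sigma)\to 0$ as $|\sigma|\to 0$ by (i) invoking a global Girsanov drift $v$ for $\mu$ relative to $P$, (ii) splitting the defect into a Jensen piece and a gradient-projection piece $\mathrm{def}_t$, and (iii) claiming $\mathrm{def}_t\to 0$ because $\nu_t:=\mu_{t|[0,\underline{\sigma}(t)],x}$ concentrates toward $\delta_{x_t}$ and $\hat v_t$ becomes ``constant in $y$''. Step (i) is not available under the paper's hypotheses: $P$ is only the unique \emph{weak} solution with merely continuous coefficients, so no martingale-representation theorem hands you a canonical progressively measurable $v$. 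Step (iii) is heuristic: passing to the limit requires uniform-in-$(t,x)$ control on the concentration rate of $\nu_t$, on the modulus of continuity of $a^{-1}$, and on the oscillation of $\hat v_t$, none of which is supplied (and note that even a constant $\hat v_t$ yields a non-gradient $a^{-1}\hat v_t$ when $a$ varies, so the concentration of $\nu_t$ is doing essential, unquantified work).

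The paper's reverse direction is entirely different and sidesteps all of this: it never takes a mesh limit. Having obtained $R(\mu||P)\ge\mathrm{RHS}(\sigma)$ from \eqref{eq:I2ident1} and \eqref{eq:DG1}, it applies the \emph{second} Dawson--G\"artner inequality \eqref{eq:DG2}, which bounds each interval integral $\int_{t_j}^{t_{j+1}}\n^2\,dt$ \emph{below} by the single-time marginal entropy $R\big(\mu_{t_{j+1}|[0,t_j],x}\,\big|\big|\,P_{t_{j+1}|t_j,x_{t_j}}\big)$. Identity \eqref{eq:I2ident4} then weakens this to conditioning only on the finite set $\sigma_{;j}$, and summing over $j$ reproduces precisely the expression in \eqref{eq:I2ident2}, whose supremum over $\sigma\in\J_*$ equals $R(\mu||P)$ by the projective-limit Theorem~\ref{thm:DG}. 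One thus gets the sandwich $R(\mu||P)\ge\sup_\sigma\mathrm{RHS}(\sigma)\ge R(\mu||P)$ with no drift representation and no $\mathrm{def}_t$ analysis; the role your mesh limit plays is instead carried by the finite-dimensional projective characterisation of $R$.
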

Our paper has the following format. In Section 3 we make some preliminary definitions, defining the process $P$ against which the relative entropy is taken in this paper. In Section \ref{Sect:three} we employ the projective limits approach of \cite{dawson-gartner:87} to obtain the chief result of this paper: Theorem \ref{thm:ratefunctionfinal}. This gives an explicit integral representation of the relative entropy. In Section 5 we apply the result in Theorem  \ref{thm:ratefunctionfinal} to various corollaries, including the particular case when $\mu$ is the solution of a Martingale Problem. We finish by comparing our results to those of  \cite{budhiraja-dupuis-etal:12} and \cite{fischer:12}. 

\section{Preliminaries}
We outline some necessary definitions.

For $\sigma\in\J$ of the form $\sigma = \lbrace t_1,t_2,\ldots,t_m \rbrace$, let $\sigma_{;j} = \lbrace t_1,\ldots,t_j \rbrace$. We denote the number of elements in a partition $\sigma$ by $m(\sigma)$. We let $\J_s$ be the set of all partitions lying in $[0,s]$. For $0 < s < t \leq T$, we let $J_{s;t}$ be the set of all partitions of the form $\sigma\cup t$, where $\sigma\in \J_s$. Let $|\sigma| = \sup_{0\leq j \leq m(\sigma)-1} (t_{j+1} - t_j)$ and, for $t\in [0,T]$, let $\underline{\sigma}(t) = \sup\lbrace s\in\sigma: s\leq t\rbrace$.

Let $\pi_\sigma : \T \to \T_\sigma := \R^{d\times m(\sigma)}$ be the natural projection, i.e. such that $\pi_\sigma(x) = (x_{t_1},\ldots,x_{t_{m(\sigma)}})$. We similarly define the natural projection $\pi_{\alpha\gamma}:\T_\gamma\to\T_\alpha$ (for $\alpha\subseteq\gamma \in \J$), and we define $\pi_{[s,t]}: \T \to C([s,t];\R^d)$ to be the natural restriction of $x\in \T$ to $[s,t]$.  The expectation of some measurable function $f$ with respect to a measure $\mu$ is written as $\Exp^{\mu(x)}[f(x)]$, or simply $\Exp^{\mu}[f]$ when the context is clear. 

 For $s<t$, we write $\F_{s,t} = \pi_{[s,t]}^{-1}\B(C([s,t];\R^d))$ and $\F_\sigma = \pi_\sigma^{-1}\B(\T_\sigma)$. We define $\F_{s;t}$ to be the $\sigma$-algebra generated by $\F_s$ and $\F_{\gamma}$ (where $\gamma = [t]$).  For $\mu\in\M(\T)$, we denote its image laws by $\mu_{\sigma} := \mu\circ \pi_{\sigma}^{-1}\in\M(\T_\sigma)$ and $\mu_{[s,t]} := \mu\circ \pi_{[s,t]}^{-1} \in \M(C([s,t];\R^d))$. Let $(X_t)$ be a continuous stochastic process, adapted to $(\F_t)$ and governed by $\mu\in \M(\T)$. For $z\in \R^d$, we write the rcp given $X_s = z$ by $\mu_{|s,z}$.  For $x\in\C([0,s];\R^d)$ or $\T$, the rcp given that $X_u = x_u$ for all $0\leq u \leq s$ is written as $\mu_{|[0,s],x}$.  The rcp given that $X_u = x_u$ for all $u\leq s$, and $X_t = z$, is written as $\mu_{|s,x;t,z}$. For $\sigma\in\J_s$ and $z\in (\R^d)^{m(\sigma)}$, the rcp given that $X_u = z_u$ for all $u\in\sigma$ is written as $\mu_{|\sigma,z}$. All of these measures are considered to be in $\M(C([s,T](\R^d))$ (unless indicated otherwise in particular circumstances). The probability laws governing $X_t$ (for $t\geq s$), for each of these, are respectively $\mu_{t|s,z}$, $\mu_{t|[0,s],x}$ and $\mu_{t|\sigma,z}$. We clearly have $\mu_{s|s,z} = \delta_z$, for $\mu_s$ a.e. $z$, and similarly for the others.

\begin{remark}
See \cite[Definition 5.3.16]{karatzas-shreve:91} for a definition of a rcp. Technically, if we let $\mu^*_{|s,z}$ be the r.c.p given $X_s = z$ according to this definition, then $\mu_{|s,z} = \mu^*_{s,z}\circ\pi_{[s,T]}^{-1}$ and $\mu_{t|s,z} = \mu^*_{s,z}\circ\pi_t^{-1}$. By \cite[Theorem 3.18]{karatzas-shreve:91}, $\mu_{|s,z}$ is well-defined for $\mu_s$ a.e. $z$. Similar comments apply to the other rcp's defined above.
\end{remark}

In the definition of the relative entropy, we abbreviate $R_{\F_\sigma}(\mu || P)$ by $R_\sigma(R || P)$. If $\sigma =\lbrace t\rbrace$, we write $R_t(\mu || P)$. 

\section{The Relative Entropy $R(\cdot ||P)$ using Projective Limits}\label{Sect:three}
In this section we derive an integral representation of the relative entropy $R(\mu ||P)$, for arbitrary $\mu\in\M(\T)$. We use Sanov's Theorem to obtain the initial expressions in Theorem \ref{thm:DG}, before adapting the projective limits approach of \cite{dawson-gartner:87}  to obtain the central result (Theorem \ref{thm:ratefunctionfinal}).

We begin with a standard decomposition result for the relative entropy \cite{donsker-varadhan:83}.
 \begin{lemma}\label{lem:DonskVara}
Let $X$ be a Polish Space with sub $\sigma$-algebras $\G\subseteq\F \subseteq \B(X)$. Let $\mu$ and $\nu$ be probability measures on $(X,\F)$, and their regular conditional probabilities  over $\G$ be (respectively) $\mu_\omega$ and $\nu_\omega$. Then
\begin{equation*}
R_{\F}(\mu||\nu) = R_\G(\mu||\nu) + \Exp^{\mu(\omega)}\left[R_{\F}(\mu_\omega||\nu_\omega)\right].
\end{equation*}
\end{lemma}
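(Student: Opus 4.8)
The plan is to establish the two inequalities separately. The direction $R_\F(\mu||\nu)\le R_\G(\mu||\nu)+\Exp^{\mu(\omega)}\left[R_\F(\mu_\omega||\nu_\omega)\right]$ follows directly from the variational Definition~\ref{eq:relativeentropy} by conditioning the test function. Fix a bounded $\F$-measurable $f$ and put $g(\omega):=\log\Exp^{\nu_\omega}[e^f]$; since $\omega\mapsto\nu_\omega$ is $\G$-measurable and $|f|$ is bounded, $g$ is a bounded $\G$-measurable function. The defining identities for regular conditional probabilities, $\Exp^{\mu}[f]=\Exp^{\mu(\omega)}\big[\Exp^{\mu_\omega}[f]\big]$ and $\Exp^{\nu}[e^f]=\Exp^{\nu(\omega)}\big[e^{g(\omega)}\big]$, give
\[
\Exp^{\mu}[f]-\log\Exp^{\nu}[e^f]=\Exp^{\mu(\omega)}\Big[\Exp^{\mu_\omega}[f]-g(\omega)\Big]+\Big(\Exp^{\mu(\omega)}[g(\omega)]-\log\Exp^{\nu(\omega)}[e^{g(\omega)}]\Big).
\]
Inside the first expectation, $\Exp^{\mu_\omega}[f]-g(\omega)=\Exp^{\mu_\omega}[f]-\log\Exp^{\nu_\omega}[e^f]\le R_\F(\mu_\omega||\nu_\omega)$ because $f$ is an admissible test function for the fibre problem; the second bracket is $\le R_\G(\mu||\nu)$ because $g$ is admissible there. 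Taking the supremum over $f$ yields this inequality (and, incidentally, forces the right-hand side to be $+\infty$ whenever $R_\F(\mu||\nu)=\infty$).

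For the reverse inequality $R_\F(\mu||\nu)\ge R_\G(\mu||\nu)+\Exp^{\mu(\omega)}\left[R_\F(\mu_\omega||\nu_\omega)\right]$ I would pass to densities, using the standard identification of $R_\Hh(\mu||\nu)$ with $\Exp^{\mu}[\log(d\mu|_\Hh/d\nu|_\Hh)]$ when $\mu|_\Hh\ll\nu|_\Hh$ and with $+\infty$ otherwise. If $R_\F(\mu||\nu)<\infty$, write $Z=d\mu/d\nu$ on $\F$ and $Z_\G=\Exp^{\nu}[Z\mid\G]=d\mu|_\G/d\nu|_\G$. One then checks, by testing against sets $A\cap B$ with $A\in\F,\ B\in\G$, that for $\mu|_\G$-almost every $\omega$ the fibre measures satisfy $\mu_\omega\ll\nu_\omega$ with $d\mu_\omega/d\nu_\omega=Z/Z_\G(\omega)$, whence $R_\F(\mu_\omega||\nu_\omega)=\Exp^{\mu_\omega}[\log Z]-\log Z_\G(\omega)$. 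Since $\log Z$ and $\log Z_\G$ lie in $L^1(\mu)$ (using $x\log x\ge -1/e$ to control the negative parts), integrating this identity against $\mu(d\omega)$ and using $\Exp^{\mu(\omega)}\big[\Exp^{\mu_\omega}[\,\cdot\,]\big]=\Exp^{\mu}[\,\cdot\,]$ gives $\Exp^{\mu(\omega)}\left[R_\F(\mu_\omega||\nu_\omega)\right]=\Exp^{\mu}[\log Z]-\Exp^{\mu}[\log Z_\G]=R_\F(\mu||\nu)-R_\G(\mu||\nu)$, which is in fact the full identity. If $R_\F(\mu||\nu)=\infty$, then either $R_\G(\mu||\nu)=\infty$, and the right-hand side is $+\infty$ because $R_\F(\mu_\omega||\nu_\omega)\ge 0$; or $\mu|_\G\ll\nu|_\G$ but $\mu|_\F\not\ll\nu|_\F$, in which case for $A\in\F$ with $\nu(A)=0<\mu(A)$ one has $\nu_\omega(A)=0$ for $\mu|_\G$-a.e.\ $\omega$ while $\mu_\omega(A)>0$ on a set of positive $\mu|_\G$-mass, so $R_\F(\mu_\omega||\nu_\omega)=\infty$ there and the right-hand side is again $+\infty$. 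Together with the first inequality this proves the lemma.

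The only real work is the measurable disintegration underlying the reverse inequality: existence of the regular conditional probabilities $\mu_\omega,\nu_\omega$ over $\G$ (which hold on the Polish space $X$, cf.\ the remark following the rcp definition); measurability of $\omega\mapsto R_\F(\mu_\omega||\nu_\omega)$ and of $\omega\mapsto\Exp^{\mu_\omega}[\log Z]$; the explicit Radon--Nikodym cocycle $d\mu_\omega/d\nu_\omega=Z/Z_\G(\omega)$; and enough integrability to split $\Exp^{\mu(\omega)}\big[\Exp^{\mu_\omega}[\log Z]-\log Z_\G(\omega)\big]$ into two finite integrals, together with the two degenerate absolute-continuity cases. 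None of this is deep, but it is where all the care lies. One could alternatively prove the reverse inequality in purely variational style, gluing a near-optimal $\G$-measurable $g$ to a measurably chosen family of near-optimal fibre test functions $f_\omega$ via $f(x)=g(x)+f_{\omega(x)}(x)$, but this requires a measurable selection $\omega\mapsto f_\omega$ — precisely the subtlety the density argument circumvents.
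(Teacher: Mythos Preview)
Your proof is correct and is essentially the standard argument for the chain rule of relative entropy. Note, however, that the paper does not actually prove this lemma: it simply states it as ``a standard decomposition result for the relative entropy'' and cites Donsker--Varadhan \cite{donsker-varadhan:83}. There is thus no proof in the paper to compare against; what you have written is precisely the kind of argument one finds in the cited reference (variational bound in one direction, Radon--Nikodym cocycle $d\mu_\omega/d\nu_\omega = Z/Z_\G(\omega)$ in the other), together with careful bookkeeping of the degenerate absolute-continuity cases.
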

The following Theorem is proved in Section \ref{sect:DGproof} of the Appendix.
\begin{theorem}\label{thm:DG}
If $\alpha,\sigma \in \J$ and $\alpha\subseteq\sigma$, then $R_\alpha(\mu||P) \leq R_\sigma(\mu||P)$. Furthermore,
\begin{align}
R_{\F_{s,t}}(\mu||P) &= \sup_{\sigma\in\J\cap [s,t]} R_{\sigma} (\mu||P),\label{eq:projlimit1}\\
R_{\F_{s;t}}(\mu||P) &= \sup_{\sigma\in\J_{s;t}} R_{\sigma} (\mu||P).\label{eq:projlimit2}
\end{align}
It suffices for the supremums in \eqref{eq:projlimit1} to take $\sigma\subset\mathcal{Q}_{s,t}$, where $\mathcal{Q}_{s,t}$ is any countable dense subset of $[s,t]$. Thus we may assume that there exists a sequence $\sigma^{(n)}\subset\mathcal{Q}_{s,t}$ of partitions such that $\sigma^{(n)}\subseteq \sigma^{(n+1)}$, $|\sigma^{(n)}| \to 0$ as $n\to \infty$ and
\begin{equation}
R_{\F_{s,t}}(\mu||P) = \lim_{n\to\infty} R_{\sigma^{(n)}}(\mu||P).\label{eq:projlimit3}
\end{equation}
\end{theorem}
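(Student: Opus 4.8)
The plan is to deduce the whole statement from three essentially self-contained ingredients: monotonicity of relative entropy under refinement of the underlying $\sigma$-algebra; the fact that the cylinder $\sigma$-algebras based at a countable dense set of times already generate $\F_{s,t}$ (and, together with $\F_{\{t\}}$, generate $\F_{s;t}$); and an abstract continuity lemma stating that relative entropy along an increasing sequence of $\sigma$-algebras converges up to the relative entropy on the generated $\sigma$-algebra. None of this invokes the specific structure of $P$ as a solution of a martingale problem; it is purely a statement about a pair of probability measures on path space.

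I would first dispose of monotonicity. If $\alpha\subseteq\sigma$ then $\F_\alpha\subseteq\F_\sigma$, and Lemma \ref{lem:DonskVara} applied with $\G=\F_\alpha$ and $\F=\F_\sigma$ gives $R_\sigma(\mu||P)=R_\alpha(\mu||P)+\Exp^{\mu(\omega)}[R_{\F_\sigma}(\mu_\omega||P_\omega)]\ge R_\alpha(\mu||P)$, since relative entropy is nonnegative (take $f=0$ in Definition \ref{eq:relativeentropy}). The same inequality for an arbitrary nested pair of sub-$\sigma$-algebras shows $R_\sigma(\mu||P)\le R_{\F_{s,t}}(\mu||P)$ whenever $\sigma\subseteq[s,t]$, and $R_\sigma(\mu||P)\le R_{\F_{s;t}}(\mu||P)$ whenever $\sigma\in\J_{s;t}$ (here one uses that $\F_\sigma\subseteq\sigma(\F_s\cup\F_{\{t\}})$, which holds because $\tau\in\J_s\Rightarrow\F_\tau\subseteq\F_s=\F_{0,s}$). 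Hence the right-hand sides of \eqref{eq:projlimit1} and \eqref{eq:projlimit2} are already dominated by the left-hand sides, and only the reverse inequalities remain.

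Next comes the topological step: $\bigcup_{\sigma\in\J\cap[s,t]}\F_\sigma$ is an algebra of sets (given $\sigma_1,\sigma_2$, both $\F_{\sigma_1}$ and $\F_{\sigma_2}$ sit inside the $\sigma$-algebra $\F_{\sigma_1\cup\sigma_2}$), and it generates $\F_{s,t}$; in fact the smaller algebra $\bigcup\{\F_\sigma:\sigma\subset\mathcal{Q}_{s,t}\}$ does too, for any countable dense $\mathcal{Q}_{s,t}\subseteq[s,t]$, because paths are continuous and so a closed ball $\{x\in C([s,t];\R^d):\norm{x-y}\le\epsilon\}$ equals $\bigcap_{r\in\mathcal{Q}_{s,t}}\{|x_r-y_r|\le\epsilon\}$; hence the evaluations at times in $\mathcal{Q}_{s,t}$ generate $\B(C([s,t];\R^d))$, and pulling back through $\pi_{[s,t]}$ gives the claim (this simultaneously yields the ``it suffices to take $\sigma\subset\mathcal{Q}_{s,t}$'' assertion). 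For $\F_{s;t}$ one uses additionally $\F_s=\F_{0,s}$, so that $\F_{s;t}=\sigma(\F_s\cup\F_{\{t\}})$ is generated by $\bigcup_{\sigma\in\J_{s;t}}\F_\sigma$. The crux is then the abstract lemma: if $\Hh_n\uparrow$ are sub-$\sigma$-algebras with $\Hh=\sigma(\bigcup_n\Hh_n)$, then $R_{\Hh_n}(\mu||\nu)\uparrow R_\Hh(\mu||\nu)$. Monotonicity of the sequence is the previous paragraph; for the reverse inequality I would argue variationally: given a bounded $\Hh$-measurable $f$, use that $\bigcup_n\Hh_n$ is an algebra generating $\Hh$, together with truncation, to produce bounded $\Hh_n$-measurable $f_n$ with $\norm{f_n}_\infty\le\norm{f}_\infty$ and $f_n\to f$ in $L^1(\mu+\nu)$; passing to an a.e.-convergent subsequence and applying dominated convergence gives $\Exp^\mu[f_n]-\log\Exp^\nu[e^{f_n}]\to\Exp^\mu[f]-\log\Exp^\nu[e^f]$, so $\sup_n R_{\Hh_n}(\mu||\nu)\ge\Exp^\mu[f]-\log\Exp^\nu[e^f]$, and taking the supremum over $f$ finishes it. The main obstacle is exactly this lemma when $R_\Hh(\mu||\nu)=+\infty$: one must make sure the approximation does not silently lose the singular part of $\mu$ relative to $\nu$, nor let the normalising term $\log\Exp^\nu[e^{f_n}]$ escape, which the uniform bound $\norm{f_n}_\infty\le\norm{f}_\infty$ is designed to prevent; the alternative route via Radon--Nikodym derivatives ($f_n=\Exp^\nu[\tfrac{d\mu}{d\nu}\,|\,\Hh_n]$, $R_{\Hh_n}(\mu||\nu)=\Exp^\nu[f_n\log f_n]$, martingale convergence and Fatou applied to the bounded-below convex map $x\mapsto x\log x$) handles the finite case transparently but needs a separate data-processing estimate when $R_\Hh=\infty$, which is why I would favour the variational route.

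Finally I would assemble. Enumerating $\mathcal{Q}_{s,t}=\{q_1,q_2,\dots\}$ and letting $\sigma^{(n)}$ consist of $q_1,\dots,q_n$ in increasing order produces an increasing sequence of partitions with $\bigcup_n\sigma^{(n)}$ dense in $[s,t]$, whence $\sigma\big(\bigcup_n\F_{\sigma^{(n)}}\big)=\F_{s,t}$ by the topological step; moreover any finite subset of a dense subset of the compact interval $[s,t]$ eventually forms an $\epsilon$-net of it, so $|\sigma^{(n)}|\to0$. The abstract lemma then gives $R_{\F_{s,t}}(\mu||P)=\lim_n R_{\sigma^{(n)}}(\mu||P)=\sup_n R_{\sigma^{(n)}}(\mu||P)$, which is \eqref{eq:projlimit3}; and combining this with the chain $\sup_n R_{\sigma^{(n)}}(\mu||P)\le\sup_{\sigma\subset\mathcal{Q}_{s,t}}R_\sigma(\mu||P)\le\sup_{\sigma\in\J\cap[s,t]}R_\sigma(\mu||P)\le R_{\F_{s,t}}(\mu||P)$ — the last step being monotonicity — collapses everything to equalities, proving \eqref{eq:projlimit1}. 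Identity \eqref{eq:projlimit2} is obtained identically, using the increasing sequence $\F_{\sigma^{(n)}\cup\{t\}}$ with $\sigma^{(n)}\subset\mathcal{Q}_{0,s}$. (A large-deviations alternative — combining Sanov's theorem with the Dawson--G\"artner projective-limit theorem, as the introduction hints — would also work, but the direct measure-theoretic route above sidesteps the delicate identification of $\M(\T)$ with a projective limit.)
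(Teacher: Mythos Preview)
Your proof is correct, but it follows a genuinely different route from the paper's own argument. The paper establishes \eqref{eq:projlimit1}--\eqref{eq:projlimit3} via large deviations: it applies Sanov's Theorem to the empirical measures of i.i.d.\ samples from $P$ on both $\M(\T)$ and each $\M(\T_\sigma)$, then invokes the Dawson--G\"artner projective-limit theorem to obtain an LDP on the projective limit $\undt{\M}(\T)$ with rate $\sup_{\sigma\in\J}R_\sigma(\cdot||P)$, and finally transfers this back to $\M(\T)$ using the Inverse Contraction Principle (exploiting exponential tightness and the fact that $\iota:\M(\T)\hookrightarrow\undt{\M}(\T)$ is a continuous injection). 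Uniqueness of rate functions then forces $R(\cdot||P)=\sup_\sigma R_\sigma(\cdot||P)$. In other words, the paper takes precisely the ``large-deviations alternative'' you sketch in your closing parenthesis and dismiss as more delicate.

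Your direct measure-theoretic route --- monotonicity from Lemma~\ref{lem:DonskVara}, generation of $\F_{s,t}$ by cylinder $\sigma$-algebras over a dense time set, and the increasing-$\sigma$-algebra continuity lemma for relative entropy --- is more elementary and entirely self-contained: it neither needs Sanov's Theorem nor the identification of $\M(\T)$ inside a projective limit, and it makes transparent that nothing about $P$ beyond its being a fixed reference measure is used. The paper's approach, by contrast, fits naturally into the large-deviations framework that motivates the whole article and reuses machinery already in play, at the cost of importing several nontrivial black boxes. One small point worth tightening in your write-up: the cleanest way to produce the approximants $f_n$ in your abstract lemma is to set $f_n=\Exp^{(\mu+\nu)/2}[f\mid\Hh_n]$, which immediately gives $\norm{f_n}_\infty\le\norm{f}_\infty$ and, by the martingale convergence theorem, $f_n\to f$ both $(\mu+\nu)$-a.s.\ and in $L^1(\mu+\nu)$; this avoids the somewhat vague appeal to ``truncation'' and dispenses with the need to pass to a subsequence.
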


We start with a technical lemma.
\begin{lemma}\label{lem:technical1}
Let $t>s$, $\alpha,\sigma \in \J_s$, $\sigma\subset \alpha$ and $s\in \sigma$. Then for $\mu_\sigma$ a.e. $x$, 
\begin{equation*}
R_t (\mu_{|\sigma,x}||P_{|s,x_s}) = R (\mu_{t|\sigma,x}||P_{t|s,x_s}).
\end{equation*}
Secondly,
\begin{equation*}
E^{\mu_{\sigma}(x)}\left[R_t (\mu_{|\sigma,x}||P_{|s,x_s})\right] \leq E^{\mu_{\alpha}(z)}\left[R_t (\mu_{|\alpha,z}||P_{|s,z_s})\right]. 
\end{equation*}
\end{lemma}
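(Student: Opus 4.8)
The plan is to read the first equality directly off the variational characterisation in Definition \ref{eq:relativeentropy}. Both $\mu_{|\sigma,x}$ and $P_{|s,x_s}$ are elements of $\M(C([s,T];\R^d))$, and on this space $\F_t$ is generated by the single evaluation map $\pi_t$. Hence every bounded $\F_t$-measurable function has the form $h\circ\pi_t$ with $h:\R^d\to\R$ bounded measurable, and conversely. Since $\Exp^{\mu_{|\sigma,x}}[h\circ\pi_t]=\Exp^{\mu_{t|\sigma,x}}[h]$ and $\Exp^{P_{|s,x_s}}[\exp(h\circ\pi_t)]=\Exp^{P_{t|s,x_s}}[\exp(h)]$ by the very definition of the marginals, taking the supremum over $h$ gives
\[
R_t(\mu_{|\sigma,x}||P_{|s,x_s})=R(\mu_{t|\sigma,x}||P_{t|s,x_s})
\]
for every $x$ at which the rcp's are defined, hence for $\mu_\sigma$-a.e.\ $x$. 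The identical argument applied to $\alpha$ yields $R_t(\mu_{|\alpha,z}||P_{|s,z_s})=R(\mu_{t|\alpha,z}||P_{t|s,z_s})$ for $\mu_\alpha$-a.e.\ $z$; I will use both forms to recast the second claim purely in terms of the time-$t$ marginals.

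\textbf{Second inequality (monotonicity under refinement).} The strategy is to exploit convexity of $\nu\mapsto R(\nu||\cdot)$ together with the tower property of conditional expectations, rather than subtracting two instances of the decomposition in Lemma \ref{lem:DonskVara} (which would produce the inequality with the wrong sign). Write $\beta=\alpha\setminus\sigma$; since $s\in\sigma$ and $\alpha\in\J_s$, the coordinate at $s$ is common to both conditionings. Let $\kappa_x$ denote the regular conditional law under $\mu$ of the coordinates indexed by $\alpha$ given that the coordinates indexed by $\sigma$ equal $x$, so that $\kappa_x$ is supported on $\lbrace z:z_s=x_s\rbrace$. The structural identity I rely on is the tower relation for regular conditional probabilities (valid because $\F_\sigma\subseteq\F_\alpha$): for $\mu_\sigma$-a.e.\ $x$,
\[
\mu_{t|\sigma,x}=\int_{\T_\alpha}\mu_{t|\alpha,z}\, d\kappa_x(z).
\]

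Granting this, I would argue as follows. Because $R(\cdot||P_{t|s,x_s})$ is the supremum over $h\in\mathcal{E}$ of the affine maps $\nu\mapsto\Exp^{\nu}[h]-\log\Exp^{P_{t|s,x_s}}[\exp h]$, it is convex in its first argument, so Jensen's inequality against $\kappa_x$ gives
\[
R(\mu_{t|\sigma,x}||P_{t|s,x_s})\le\int_{\T_\alpha}R(\mu_{t|\alpha,z}||P_{t|s,x_s})\, d\kappa_x(z).
\]
Since $z_s=x_s$ for $\kappa_x$-a.e.\ $z$, the fixed second argument $P_{t|s,x_s}$ may be replaced by $P_{t|s,z_s}$ inside the integral. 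Integrating both sides against $\mu_\sigma(dx)$ and applying the tower property once more, in the form $\Exp^{\mu_\sigma(x)}\!\int(\cdot)\, d\kappa_x=\Exp^{\mu_\alpha(z)}[\,\cdot\,]$, yields
\[
\Exp^{\mu_\sigma(x)}\big[R(\mu_{t|\sigma,x}||P_{t|s,x_s})\big]\le\Exp^{\mu_\alpha(z)}\big[R(\mu_{t|\alpha,z}||P_{t|s,z_s})\big],
\]
which is the second claim after substituting the first identity (and its $\alpha$-analogue) on each side.

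\textbf{Main obstacle.} The delicate point is the rigorous justification of the averaging identity $\mu_{t|\sigma,x}=\int\mu_{t|\alpha,z}\, d\kappa_x(z)$ and of the final tower step: one must confirm the measurable dependence of $z\mapsto\mu_{t|\alpha,z}$ and the mutual compatibility of the various a.e.-defined regular conditional probabilities on the Polish space $C([s,T];\R^d)$. These follow from the disintegration and tower theory for rcp's (cf.\ \cite[Theorem 5.3.16 ff.]{karatzas-shreve:91}), but must be phrased for the a.e.-defined versions in play here. Once that bookkeeping is in place, the convexity estimate and its passage through the integral are routine, so the substance of the proof is the careful handling of the conditional measures.
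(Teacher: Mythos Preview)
Your proof is correct and shares the same skeleton as the paper's---both rest on the tower identity $\mu_{t|\sigma,x}=\int\mu_{t|\alpha,z}\,d\kappa_x(z)$ together with a convexity argument---but the execution differs. The paper first reduces to $\alpha=\sigma\cup\{u\}$, then invokes the Dupuis--Ellis partition formula $R(\nu\|P)=\sup_{\mathfrak{a}}\sum_{A\in\mathfrak{a}}\nu(A)\log\tfrac{\nu(A)}{P(A)}$, pushes the expectation over $\kappa_x$ inside the supremum (one inequality), and then closes with the Gibbs-type estimate $\sum_A\mu_{t|\alpha,z}(A)\log\tfrac{\mu_{t|\alpha,z}(A)}{\mu_{t|\sigma,\pi_{\sigma\alpha} z}(A)}\ge 0$ (a second application of Jensen). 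Your route is more direct: you read convexity of $\nu\mapsto R(\nu\|P)$ straight off the variational Definition~\ref{eq:relativeentropy} as a supremum of affine functionals and apply Jensen once. The gain is economy and transparency; the paper's version is more hands-on and keeps the intermediate quantity $\mu_{t|\sigma,\pi_{\sigma\alpha} z}$ visible, which can make the measurability bookkeeping easier to trace. Both arguments require exactly the rcp/tower compatibility you correctly flag at the end.
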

\begin{proof}
The first statement is immediate from Definition \ref{eq:relativeentropy} and the Markovian nature of $P$. For the second statement, it suffices to prove this in the case that $\alpha = \sigma \cup u$, for some $u < s$.  We note that, using a property of regular conditional probabilities, for $\mu_\sigma$ a.e $x$,
\begin{equation}
\mu_{t|\sigma,x} = E^{\mu_{u|\sigma,x}(\omega)}\left[ \mu_{t|\alpha,v(x,\omega)}\right],\label{eq:rcpd1}
\end{equation}
where $v(x,\omega)\in \T_{\alpha}$, $v(x,\omega)_u = \omega$, $v(x,\omega)_r = x_r$ for all $r\in\sigma$.

We consider $\mathfrak{A}$ to be the set of all finite disjoint partitions $\mathfrak{a}\subset \B(\R^d)$ of $\R^d$. The expression for the entropy in \cite[Lemma 1.4.3]{dupuis-ellis:97} yields
\begin{equation*}
E^{\mu_\sigma(x)}\left[ R \left(\mu_{t|\sigma,x}||P_{t|s,x_s}\right)\right] = E^{\mu_\sigma(x)}\left[ \sup_{\mathfrak{a}\in\mathfrak{A}}\sum_{A\in\mathfrak{a}}\mu_{t|\sigma,x}(A)\log\frac{\mu_{t|\sigma,x}(A)}{P_{t|s,x_s}(A)}\right] .
\end{equation*}
Here the summand is considered to be zero if $\mu_{t|\sigma,x}(A) = 0$, and infinite if $\mu_{t|\sigma,x}(A) > 0$ and $P_{t|s,x_s}(A)=0$. Making use of \eqref{eq:rcpd1}, we find that
\begin{align*}
& E^{\mu_\sigma(x)}\left[R \left(\mu_{t|\sigma,x}||P_{t|s,x_s}\right)\right] \\ &= E^{\mu_\sigma(x)}\left[\sup_{\mathfrak{a}\in\mathfrak{A}}\sum_{A\in\mathfrak{a}}E^{\mu_{u|\sigma,x}(w)}\left[\mu_{t|\alpha,v(x,w)}(A)\right]\log\frac{\mu_{t|\sigma,x}(A)}{P_{t|s,x_s}(A)}\right] \\
&\leq E^{\mu_\sigma(x)} E^{\mu_{u|\sigma,x}(\omega)}\left[ \sup_{\mathfrak{a}\in\mathfrak{A}}\sum_{A\in\mathfrak{a}}\mu_{t|\alpha,v(x,\omega)}(A)\log\frac{\mu_{t|\sigma,x}(A)}{P_{t|s,x_s}(A)}\right]\\
&= E^{\mu_\alpha(z)}\left[\sup_{\mathfrak{a}\in\mathfrak{A}}\sum_{A\in\mathfrak{a}}\mu_{t|\alpha,z}(A)\log\frac{\mu_{t|\sigma,\pi_{\sigma\alpha} z}(A)}{P_{t|s,z_s}(A)}\right].
\end{align*}
We note that, for $\mu_{\alpha}$ a.e. $z$, if $\mu_{t|\sigma,\pi_{\sigma\alpha} z}(A) = 0$ in this last expression, then $\mu_{t|\alpha,z}(A) =0$ and we consider the summand to be zero. To complete the proof of the lemma, it is thus sufficient to prove that for $\mu_\alpha$ a.e. $z$
\begin{equation*}
\sup_{\mathfrak{a}\in\mathfrak{A}}\sum_{A\in\mathfrak{a}}\mu_{t|\alpha,z}(A)\log\frac{\mu_{t|\alpha,z}(A)}{P_{t|s,z_s}(A)} \geq \sup_{\mathfrak{a}\in\mathfrak{A}}\sum_{A\in\mathfrak{a}}\mu_{t|\alpha,z}(A)\log\frac{\mu_{t|\sigma,\pi_{\sigma\alpha} z}(A)}{P_{t|s,z_s}(A)} .
\end{equation*}
But, in turn, the above inequality will be true if we can prove that for each partition $\mathfrak{a}$ such that $P_{t|s,z_s}(A) > 0$ and $\mu_{t|\sigma,\pi_{\sigma\alpha} z}(A) > 0$ for all $A\in\mathfrak{a}$,
\begin{equation*}
\sum_{A\in\mathfrak{a}}\mu_{t|\alpha,z}(A)\log\frac{\mu_{t|\alpha,z}(A)}{P_{t|s,z_s}(A)} -\sum_{A\in\mathfrak{a}}\mu_{t|\alpha,z}(A)\log\frac{\mu_{t|\sigma,\pi_{\sigma\alpha} z}(A)}{P_{t|s,z_s}(A)}\geq 0. 
\end{equation*}
The left hand side is equal to $\sum_{A\in\mathfrak{a}}\mu_{t|\alpha,z}(A)\log\frac{\mu_{t|\alpha,z}(A)}{\mu_{t|\sigma,\pi_{\sigma\alpha} z}(A)}$. An application of Jensen's inequality demonstrates that this is greater than or equal to zero.
\end{proof}
\begin{remark}
If, contrary to the definition, we briefly consider $\mu_{|[0,t],x}$ to be a probability measure on $\T$, such that $\mu(A)=1$ where $A$ is the set of all points $y$ such that $y_s = x_s$ for all $s\leq t$, then it may be seen from the definition of $R$ that
\begin{equation}
R_{\F_T}\left(\mu_{|[0,t],x}||P_{|[0,t],x}\right) = R_{\F_{t,T}}\left(\mu_{|[0,t],x}||P_{|[0,t],x}\right) = R_{\F_{t,T}}\left(\mu_{|[0,t],x}||P_{|t,x_t}\right).  \label{eq:I2smallident}
\end{equation}
We have also made use of the Markov Property of $P$. This is why our convention, to which we now return, is to consider $\mu_{|[0,t],x}$ to be a probability measure on $(C([t,T];\R^d),\F_{t,T})$.
\end{remark}

This leads us to two alternative expressions for $R(\mu || P)$.
\begin{lemma}\label{lem:part1}
Each $\sigma$ in the supremums below is of the form $\lbrace t_1 < t_2 < \ldots < t_{m(\sigma)-1} < t_{m(\sigma)}\rbrace$ for some integer $m(\sigma)$. 
\begin{align}
&R\left(\mu||P\right) = R_0(\mu || P) + \sum_{j=1}^{m(\sigma)-1}E^{\mu_{[0,t_j]}(x)}\left[R_{\F_{t_j,t_{j+1}}}\left( \mu_{| [0,t_j],x}||P_{|t_j,x_{t_j}}\right)\right], \label{eq:I2ident1}\\
&R\left(\mu||P\right) =  R_0(\mu || P) +  \sup_{\sigma\in\J_*}\sum_{j=1}^{m(\sigma)-1}E^{\mu_{\sigma_{;j}}(x)}\left[ R_{t_{j+1}}\left( \mu_{t_{j+1} | \sigma_{;j},x}||P_{t_{j+1}|t_j,x_{t_j}}\right)\right], \label{eq:I2ident2} \\
&E^{\mu_{[0,s]}(x)}\left[R_t\left(\mu_{t|[0,s],x}||P_{t|s,x_s}\right)\right] = \sup_{\sigma\in\J_s}E^{\mu_{\sigma}(y)}\left[ R_t\left(\mu_{t|\sigma,y}||P_{t|s,y_s}\right)\right],\label{eq:I2ident4}
\end{align}
where in this last expression $0\leq s < t \leq T$. 
\end{lemma}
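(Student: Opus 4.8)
The plan is to obtain \eqref{eq:I2ident1} by iterating the Donsker--Varadhan decomposition of Lemma \ref{lem:DonskVara}, then to pass to the fully discretized forms \eqref{eq:I2ident2} and \eqref{eq:I2ident4} using Theorem \ref{thm:DG} together with the monotonicity supplied by Lemma \ref{lem:technical1}. First I would prove \eqref{eq:I2ident1}. Fix a partition $\sigma = \lbrace t_1 < \dots < t_{m(\sigma)}\rbrace \in \J_*$, so $t_1 = 0$ and $t_{m(\sigma)} = T$. Applying Lemma \ref{lem:DonskVara} with $\G = \F_{t_j}$ and $\F = \F_{t_{j+1}}$ (more precisely $\F = \F_{0,t_{j+1}}$, noting $R$ over $\F_{0,t_{j+1}}$ equals $R$ over $\F_{t_{j+1}}$), and using that the rcp of $\mu$ (resp.\ $P$) over $\F_{t_j}$, restricted to the coordinates in $[t_j, t_{j+1}]$, is $\mu_{|[0,t_j],x}$ (resp.\ $P_{|[0,t_j],x} = P_{|t_j, x_{t_j}}$ by the Markov property of $P$, as recorded in \eqref{eq:I2smallident}), one gets
\begin{equation*}
R_{\F_{0,t_{j+1}}}(\mu||P) = R_{\F_{0,t_j}}(\mu||P) + E^{\mu_{[0,t_j]}(x)}\left[ R_{\F_{t_j,t_{j+1}}}\left(\mu_{|[0,t_j],x} || P_{|t_j,x_{t_j}}\right)\right].
\end{equation*}
Telescoping this identity from $j=1$ to $m(\sigma)-1$, with $R_{\F_{0,t_1}}(\mu||P) = R_{\F_0}(\mu||P) = R_0(\mu||P)$ and $R_{\F_{0,T}}(\mu||P) = R(\mu||P)$, yields \eqref{eq:I2ident1}; note the right-hand side is in fact independent of the choice of $\sigma\in\J_*$, which is already implicit in the statement.

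Next, for \eqref{eq:I2ident4}, fix $0 \le s < t \le T$ and first apply Lemma \ref{lem:DonskVara} a single time with $\G = \F_s$, $\F = \F_{s;t}$ (the $\sigma$-algebra generated by $\F_s$ and the coordinate at $t$): this gives
\begin{equation*}
R_{\F_{s;t}}(\mu||P) = R_{\F_s}(\mu||P) + E^{\mu_{[0,s]}(x)}\left[ R_t\left(\mu_{t|[0,s],x} || P_{t|s,x_s}\right)\right],
\end{equation*}
where I have used $R_{\F_{s;t}}(\mu_{|[0,s],x}||P_{|[0,s],x}) = R_t(\mu_{t|[0,s],x}||P_{t|s,x_s})$, valid because the only degree of freedom beyond $\F_s$ is the coordinate at time $t$. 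On the other hand, \eqref{eq:projlimit2} of Theorem \ref{thm:DG} gives $R_{\F_{s;t}}(\mu||P) = \sup_{\sigma\in\J_{s;t}} R_\sigma(\mu||P)$, and each such $\sigma$ has the form $\tau\cup\{t\}$ with $\tau\in\J_s$; a further application of Lemma \ref{lem:DonskVara} with $\G = \F_\tau$ decomposes $R_{\tau\cup\{t\}}(\mu||P) = R_\tau(\mu||P) + E^{\mu_\tau(y)}[R_t(\mu_{t|\tau,y}||P_{t|s,y_s})]$ (using the Markov property of $P$ to reduce $P_{t|\tau,y}$ to $P_{t|s,y_s}$, since $s = \underline{\tau}(t)$ is the largest point of $\tau$). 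Subtracting the common term $R_{\F_s}(\mu||P) = \sup_{\tau\in\J_s}R_\tau(\mu||P)$ (the latter equality being \eqref{eq:projlimit1}), and checking that the two supremums interact correctly — this is where Lemma \ref{lem:technical1} enters, since it guarantees that refining $\tau$ only increases $E^{\mu_\tau(y)}[R_t(\mu_{t|\tau,y}||P_{t|s,y_s})]$, so the supremum over $\tau$ may be taken simultaneously in both terms along a common refining sequence — yields \eqref{eq:I2ident4}. Finally, \eqref{eq:I2ident2} follows by substituting \eqref{eq:I2ident4} (with $s = t_j$, $t = t_{j+1}$, restricting the inner partitions to lie in $[0,t_j]$ and contain $t_j$, which by Lemma \ref{lem:technical1} and cofinality does not change the supremum) into \eqref{eq:I2ident1}, and then observing that taking $\sup_{\sigma\in\J_*}$ of the resulting sum is consistent because each summand is monotone under refinement.

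The main obstacle is the bookkeeping in \eqref{eq:I2ident4}: one must show that the supremum of a sum (of $R_\tau(\mu||P)$ and the conditional-entropy term) equals the sum of suprema, which is false in general but holds here because both terms are monotone nondecreasing along refinements of $\tau$ (the first by the monotonicity clause of Theorem \ref{thm:DG}, the second by the second assertion of Lemma \ref{lem:technical1}), so both suprema are attained in the limit along a single cofinal refining sequence $\tau^{(n)}$; one then subtracts, using that both limits are finite on the event where $R(\mu||P) < \infty$ (and handles the $\infty$ case separately, where the identity is trivial). A secondary technical point is the careful identification of the various regular conditional probabilities and their marginals across the measurable spaces $\T$, $C([s,T];\R^d)$, and $\T_\sigma$, which is routine given the conventions set up in Section 3 but must be done with the Markov property of $P$ invoked at each step.
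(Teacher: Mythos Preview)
Your arguments for \eqref{eq:I2ident1} and \eqref{eq:I2ident4} are essentially the paper's: iterate Lemma~\ref{lem:DonskVara} for the first, and for the third equate the two decompositions of $R_{\F_{s;t}}(\mu\|P)$ (one via $\G=\F_s$, one via \eqref{eq:projlimit2}), then handle the supremum using the monotonicity from Lemma~\ref{lem:technical1} along a common refining sequence. That part is fine.

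The gap is in your derivation of \eqref{eq:I2ident2}. You propose to ``substitute \eqref{eq:I2ident4} (with $s=t_j$, $t=t_{j+1}$) into \eqref{eq:I2ident1}'', but the summands in \eqref{eq:I2ident1} are path-space entropies $R_{\F_{t_j,t_{j+1}}}\bigl(\mu_{|[0,t_j],x}\|P_{|t_j,x_{t_j}}\bigr)$, whereas the left side of \eqref{eq:I2ident4} is the single-time entropy $E^{\mu_{[0,t_j]}(x)}\bigl[R_{t_{j+1}}(\mu_{t_{j+1}|[0,t_j],x}\|P_{t_{j+1}|t_j,x_{t_j}})\bigr]$. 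These do not match: the former is in general strictly larger, so \eqref{eq:I2ident4} cannot be plugged in. At best your route yields the inequality $R(\mu\|P)\geq R_0(\mu\|P)+\sup_{\sigma}\sum_j E^{\mu_{\sigma_{;j}}}[R_{t_{j+1}}(\cdots)]$ (by first replacing $R_{\F_{t_j,t_{j+1}}}$ by $R_{t_{j+1}}$ via monotonicity, then bounding the supremum in \eqref{eq:I2ident4} below by the choice $\tau=\sigma_{;j}$), but the reverse inequality is not recoverable from \eqref{eq:I2ident1} and \eqref{eq:I2ident4} alone.

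The paper instead proves \eqref{eq:I2ident2} directly and independently of \eqref{eq:I2ident4}: for each fixed $\sigma\in\J_*$ one decomposes $R_{\sigma}(\mu\|P)$ by repeated application of Lemma~\ref{lem:DonskVara} with the nested sub-$\sigma$-algebras $\F_{\sigma_{;j}}$, obtaining
\[
R_{\sigma}(\mu\|P)=R_0(\mu\|P)+\sum_{j=1}^{m(\sigma)-1}E^{\mu_{\sigma_{;j}}(x)}\bigl[R_{t_{j+1}}\bigl(\mu_{t_{j+1}|\sigma_{;j},x}\|P_{t_{j+1}|t_j,x_{t_j}}\bigr)\bigr],
\]
and then takes $\sup_{\sigma\in\J_*}$ of both sides using \eqref{eq:projlimit1}. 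This is shorter than your proposed route and supplies both inequalities at once; you should replace your argument for \eqref{eq:I2ident2} by this direct decomposition.
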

\begin{proof}
Consider the sub $\sigma$-algebra $\F_{0,t_{m(\sigma)-1}}$. We then find, through an application of Lemma \ref{lem:DonskVara} and \eqref{eq:I2smallident}, that
\begin{multline*}
R\left(\mu ||P\right) = R_{\F_{0,t_{m(\sigma)-1}}} \left(\mu ||P\right)+ \\ E^{\mu_{[0,t_{m(\sigma)-1}]}(x)}\left[ R_{\F_{t_{m(\sigma)-1},t_{m(\sigma)}}}\left( \mu_{| [0,t_{m(\sigma)-1}],x}||P_{|t_{m(\sigma)-1},x_{t_{m(\sigma)-1}}}\right)\right].\end{multline*}
We may continue inductively to obtain the first identity. 

We use Theorem \ref{thm:DG} to prove the second identity. It suffices to take the supremum over $\J_*$, because $R_\sigma(\mu ||P) \geq R_\gamma(\mu ||P)$ if $\gamma \subset \sigma$. It thus suffices to prove that
\begin{equation}
R_{\sigma}(\mu||P) = R_0(\mu || P) +\sum_{j=1}^{m(\sigma)-1}E^{\mu_{\sigma_{;j}}(x)}\left[ R_{t_{j+1}}\left( \mu_{t_{j+1} | \sigma_{;j},x}||P_{t_{j+1}|t_j,x_{t_j}}\right)\right].
\end{equation}
But this also follows from repeated application of Lemma \ref{lem:DonskVara}. To prove the third identity, we firstly note that 
\begin{align*}
R_{\F_{s;t}}\left(\mu||P\right) &=  R_0(\mu || P) +  \sup_{\sigma\in\J_{s;t}}\sum_{j=1}^{m(\sigma)-1}E^{\mu_{\sigma_{;j}}(x)}\left[ R_{t_{j+1}}\left( \mu_{t_{j+1} | \sigma_{;j},x}||P_{t_{j+1}|t_j,x_{t_j}}\right)\right].\\
&= \sup_{\sigma\in\J_s}\left\lbrace R_{\sigma}(\mu||P) + E^{\mu_{\sigma}(x)}\left[R_t\left(\mu_{t|\sigma,x}||P_{t|s,x_s}\right)\right]\right\rbrace.
\end{align*}
The proof of this is entirely analogous to that of the second identity, except that it makes use of \eqref{eq:projlimit2} instead of \eqref{eq:projlimit1}. But, after another application of Lemma \ref{lem:DonskVara}, we also have that
\begin{equation*}
R_{\F_{s;t}}\left(\mu ||P\right)  = R_{\F_{0,s}}(\mu ||P) + E^{\mu_{[0,s]}(x)}\left[ R_t(\mu_{t|[0,s],x} ||P_{t|s,x_s})\right].
\end{equation*}
On equating these two different expressions for $R_{\F_{s;t}}\left(\mu ||P\right)$, we obtain
\begin{multline*}
E^{\mu_{[0,s]}(x)}\left[R_t(\mu_{t|[0,s],x} ||P_{t|s,x_s})\right] = \sup_{\sigma\in\J_{s}}\left\lbrace \left(R_{\sigma}(\mu ||P) - R_{\F_{0,s}}(\mu ||P)\right) \right. \\ \left.+ E^{\mu_{\sigma}(x)}\left[R_t\left(\mu_{t|\sigma,x} ||P_{t|s,x_s}\right)\right]\right\rbrace.
\end{multline*}
Let $(\sigma^{(k)}) \subset \J_s$, $\sigma^{(k-1)}\subseteq \sigma^{(k)}$ be such that $\lim_{k\to\infty}R_{\sigma^{(k)}}(\mu ||P) = R_{\F_{0,s}}(\mu ||P) $. Such a sequence exists by \eqref{eq:projlimit1}. Similarly, let $(\gamma^{(k)})\subseteq \J_s$,  be a sequence such that  $E^{\mu_{\gamma^{(k)}}(x)}\left[R_t\left(\mu_{t|\gamma^{(k)},x} ||P_{t|s,x_s}\right)\right]$ is strictly nondecreasing and asymptotes  as $k\to\infty$ to $\sup_{\sigma\in\J_s}E^{\mu_{\sigma}(x)}\left[R_t\left(\mu_{t|\sigma,x} ||P_{t|s,x_s}\right)\right]$. Lemma \ref{lem:technical1} dictates that \begin{equation*}
E^{\mu_{\sigma^{(k)}\cup\gamma^{(k)}}(x)}\left[R_t\left(\mu_{t|\sigma^{(k)}\cup\gamma^{(k)},x} ||P_{t|s,x_s}\right)\right]
\end{equation*}
asymptotes to the same limit as well. Clearly $\lim_{k\to\infty}R_{\sigma^{(k)}\cup\gamma^{(k)}}(\mu ||P) = R_{\F_{0,s}}(\mu ||P) $ because of the identity at the start of Theorem \ref{thm:DG}. This yields the third identity.
\end{proof}
\subsection{Proof of Theorem \ref{thm:ratefunctionfinal}}

In this section we work towards the proof of Theorem \ref{thm:ratefunctionfinal}, making use of some results in \cite{dawson-gartner:87}. However first we require some more definitions.

If $K\subset \R^{d}$ is compact, let $\D_K$ be the set of all $f\in\D$ whose support is contained in $K$. The corresponding space of real distributions is $\D^{'}$, and we denote the action of $\theta\in\D^{'}$ by $\langle \theta,f\rangle$.  If $\theta\in\M(\R^d)$, then clearly $\langle \theta,f\rangle = \Exp^{\theta}[f]$.  We let $C^{2,1}_0(\R^d)$ denote the set of all continuous functions, possessing continuous spatial derivatives of first and second order, a continuous time derivative of first order, and of compact support.  For $f\in\D$ and $t\in[0,T]$, we define the random variable $\nabla_t f: \R^d\to \R^d$ such that $(\nabla_t f(y))^i= \sum_{j=1}^d a^{ij}(t,y)\frac{\partial f}{\partial y^j}$ (for $x\in\T$, we may also understand $\nabla_t f(x) := \nabla_t f(x_t)$). Let $a_{ij}$ be the components of the matrix inverse of $a^{ij}$. For random variables $X,Y:\T\to\R^d$, we define the inner product $(X,Y)_{t,x} = \sum_{i,j=1}^d X^i(x) Y^j(x) a_{ij}(t,x_t)$, with associated norm $|X|_{t,x}^2 = (X(x),X(x))_{t,x}^2$. We note that $|\nabla_t f|_{t,x}^2 = \sum_{i,j=1}^d a^{ij}(t,x_t)\frac{\partial f}{\partial z^i}(x_t)\frac{\partial f}{\partial z^j}(x_t)$.

Let $\MM$ be the space of all continuous maps $[0,T] \to \M(\R^d)$, equipped with the topology of uniform convergence. For $s\in[0,T]$, $\vartheta\in \MM$ and $\nu\in\M(\R^d)$ we define $\n(s,\vartheta,\nu) \geq 0$ and such that
\begin{equation}
\n(s,\vartheta,\nu)^2 = \sup_{f\in\D}\left\lbrace\langle\vartheta,f\rangle -\frac{1}{2} E^{\nu(y)}\left[\left|\nabla_t f\right|_{t,y}^2\right] \right\rbrace.\label{defn:n}
\end{equation}
This definition is taken from \cite[Eq. (4.7)]{dawson-gartner:87} - we note that $\n$ is convex in $\vartheta$. For $\gamma\in \M(\T)$, we may naturally write $\n(s,\gamma,\nu) :=\n(s,\omega,\nu)$, where $\omega$ is the projection of $\gamma$ onto $\MM$, i.e. $\omega(s) = \gamma_s$. It is shown in \cite{dawson-gartner:87} that this projection is continuous. 
The following two definitions, lemma and two propositions are all taken (with some small modifications) from \cite{dawson-gartner:87}.
\begin{definition}
Let I be an interval of the real line. A measure $\mu\in \M(\T)$ is called absolutely continuous if for each compact set $K\subset \R^d$ there exists a neighborhood $U$ of 0 in K and an absolutely continuous function $H_K :I \to \R$ such that
\begin{equation*}
\left|  \Exp^{\mu_u }[f] - \Exp^{\mu_v}[f] \right| \leq \left| H_K(u) - H_K(v)\right|,
\end{equation*}
for all $u,v\in I$ and $f\in U_K$.
\end{definition}
\begin{lemma}\label{lem:abscontderiv}
\cite[Lemma 4.2]{dawson-gartner:87} If $\mu$ is absolutely continuous over an interval $I$, then its derivative exists (in the distributional sense) for Lebesgue a.e. $t\in I$. That is, for Lebesure a.e. $t\in I$, there exists $\dot{\mu}_t \in \D^{'}$ such that for all $f\in \D$
\begin{equation*}
\lim_{h\to 0}\frac{1}{h}\left( \langle \mu_{t+h},f\rangle - \langle \mu_t,f\rangle\right) = \langle \dot{\mu}_t,f\rangle.
\end{equation*}
\end{lemma}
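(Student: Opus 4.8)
\emph{Proof sketch.} This is \cite[Lemma 4.2]{dawson-gartner:87}; we indicate a proof. The plan is to apply the definition of absolute continuity one compact set at a time, combine it with the classical theorem that a real absolutely continuous function on an interval is differentiable Lebesgue-a.e., and then promote the resulting family of scalar derivatives to a distribution-valued derivative using separability of $\D_K$ together with the uniform-in-$f$ control furnished by the majorant $H_K$.

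First I would fix a compact $K\subset\R^d$ with associated $U_K$, $H_K$; shrinking $U_K$ if necessary I may assume it is a balanced convex neighbourhood of $0$ in $\D_K$, so that its Minkowski functional $p_K$ is a continuous seminorm on $\D_K$. For $f\in U_K$ the increments of $t\mapsto\langle\mu_t,f\rangle$ are dominated by those of $H_K$, so this map is absolutely continuous on $I$ and hence differentiable a.e.; by homogeneity one obtains, for \emph{every} $f\in\D_K$, both the bound $|\langle\mu_u,f\rangle-\langle\mu_v,f\rangle|\le p_K(f)\,|H_K(u)-H_K(v)|$ and the absolute continuity of $t\mapsto\langle\mu_t,f\rangle$. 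Next, using that $\D_K$ is separable, I would fix a countable dense set $\{f_n\}\subset\D_K$, pick Lebesgue-null sets $N_n$ off which $t\mapsto\langle\mu_t,f_n\rangle$ is differentiable and a null set $N_0$ off which $H_K'$ exists, and set $N=N_0\cup\bigcup_nN_n$. For $t\notin N$, the assignment $f_n\mapsto\frac{d}{dt}\langle\mu_t,f_n\rangle$ is linear on $\mathrm{span}\{f_n\}$ with $\big|\frac{d}{dt}\langle\mu_t,f_n\rangle\big|\le p_K(f_n)\,|H_K'(t)|$, so it extends uniquely by continuity to a continuous linear functional $\dot\mu_t$ on $\D_K$, i.e.\ an element of $\D^{'}$. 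That $\dot\mu_t$ really is the derivative then follows by splitting $f=f_n+(f-f_n)$: the $f_n$-term has difference quotient converging to $\langle\dot\mu_t,f_n\rangle$ as $h\to0$, while the $(f-f_n)$-term is bounded in modulus by $p_K(f-f_n)\big(|H_K'(t)|+o(1)\big)$, which is arbitrarily small once $f_n$ is near $f$. Finally I would exhaust $\R^d$ by compacts $K_1\subset K_2\subset\cdots$, take the union of the associated null sets, and observe that the functionals $\dot\mu_t$ obtained for different $K_m$ agree on $\D_{K_m}$ (the underlying scalar derivatives do not depend on $K$), so for a.e.\ $t\in I$ they assemble into a single element $\dot\mu_t\in\D^{'}$ (with one-sided difference quotients at the endpoints of $I$).

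The point needing care is the exchange of ``for a.e.\ $t$'' with ``for all $f\in\D$'': a priori one controls, for each fixed $f$, only a null set depending on $f$, and one must both collapse these into a single null set and check that the limiting object is continuous on each $\D_K$ --- so that it is a genuine distribution rather than a linear functional merely defined on a dense subspace. Both steps rely on the fact that one and the same absolutely continuous function $H_K$ (equivalently, one and the same continuous seminorm $p_K$) bounds the increments of $t\mapsto\langle\mu_t,f\rangle$ uniformly over $f\in\D_K$; this is exactly the compact-uniformity built into the definition of an absolutely continuous measure, and the argument collapses without it.
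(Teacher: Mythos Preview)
The paper does not give its own proof of this lemma; it is simply stated with a citation to \cite[Lemma 4.2]{dawson-gartner:87}, as part of a block of results explicitly imported from Dawson--G\"artner. Your sketch is correct and is the standard argument: reduce to a fixed compact $K$, use the absolutely continuous majorant $H_K$ to obtain a.e.\ differentiability of each scalar map $t\mapsto\langle\mu_t,f\rangle$ together with the uniform-in-$f$ bound $p_K(f)\,|H_K'(t)|$ on the difference quotients, exploit separability of $\D_K$ to collapse to a single null set, extend the resulting functional by continuity, and finally patch across an exhaustion by compacts. There is nothing to compare against in the present paper.
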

\begin{definition}\label{defn:HilbertSpace1}
For $\nu\in\M(C([s,t];\R^d))$, and $0\leq s < t \leq T$, let $L^{2}_{s,t}(\nu)$ be the Hilbert space of all measurable maps $h: [s,t]\times \R^{d} \to \R^{d}$ with inner product
\begin{equation*}
[ h_1,h_2 ] = \int_s^t\Exp^{\nu_u(x)}\left[(h_1(u,x),h_2(u,x))_{u,x}\right] du.
\end{equation*}
We denote by $L^{2}_{s,t,\nabla}(\nu)$ the closure in $L_{s,t}^{2}(\nu)$ of the linear subset generated by maps of the form $(x,u)\to \nabla_u f$, where $f\in C^{2,1}_0([s,t],\R^{d})$. We note that functions in $L^{2}_{s,t,\nabla}(\nu)$ only need to be defined $du\otimes\nu_u(dx)$ almost everywhere.
\end{definition}
Recall that $\n$ is defined in \eqref{defn:n}, and note that $\langle {}^*\mathcal{L}_t\mu_t,f\rangle := \langle \mu_t,\mathcal{L}_t f\rangle$. 
 \begin{proposition}\label{prop:intermediate}
Assume that $\mu\in\M(C([r,s];\R^d))$, such that $\mu_r = \delta_y$ for some $y\in \R^d$ and $0\leq r<s\leq T$.  We have that \cite[Eq. 4.9 and Lemma 4.8]{dawson-gartner:87}
 \begin{multline}
\int_r^s \n(t,\dot{\mu}_t-{}^*\mathcal{L}_t\mu_t,\mu_t)^2 dt = \sup_{f\in C^{2,1}_0(\R^d)}\left\lbrace \Exp^{\mu_s(x)}[f(s,x)] -f(r,y) \right. \\ \left.- \int^s_r \Exp^{\mu_t(x)}\left[\left(\frac{\partial}{\partial t} + \mathcal{L}_t\right)f(t,x) + \frac{1}{2}\left|\nabla_t f(t,x)\right|^2_{t,x}\right] dt\right\rbrace .\label{eq:DG3}
 \end{multline}
It clearly suffices to take the supremum over a countable dense subset. Assume now that $\int_r^s \n(t,\dot{\mu}_t-{}^*\mathcal{L}_t\mu_t,\mu_t)^2 dt < \infty$. Then for Lebesgue a.e. $t$, $\dot{\mu}_t = {}^*\mathcal{K}_t\mu_t$, where \cite[Lemma 4.8(3)]{dawson-gartner:87}
\begin{equation}\label{eq:KidentityDG}
\mathcal{K}_t f(\cdot) = \mathcal{L}_t  f(\cdot)+ \sum_{1\leq j\leq d}(h^{\mu}(t,\cdot))^j\frac{\partial f}{\partial x^j}(\cdot),
\end{equation}
for some $h^{\mu}\in L^{2}_{r,s,\nabla}(\mu)$ which satisfies  \cite[Lemma 4.8(4)]{dawson-gartner:87}
\begin{equation}
\int_r^s \n(t,\dot{\mu}_t-{}^*\mathcal{L}_t\mu_t,\mu_t)^2 dt = \frac{1}{2}\int_r^s \Exp^{\mu_t(x)}\left[\left| h^\mu(t,x)\right|_{t,x}^2\right] dt < \infty.\label{eq:hmuratefunction}
\end{equation}
\end{proposition}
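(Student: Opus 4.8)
The plan is to reduce everything to the definition \eqref{defn:n} of $\n$ and to the cited results of Dawson and Gärtner, the genuine content being a Fenchel duality computation together with an interchange of a pointwise supremum and the time integral. First I would unpack the integrand: by \eqref{defn:n}, with $\vartheta = \dot\mu_t - {}^*\mathcal{L}_t\mu_t$ and $\nu = \mu_t$, and using $\langle {}^*\mathcal{L}_t\mu_t, f\rangle = \Exp^{\mu_t}[\mathcal{L}_t f]$,
\[
\n(t,\dot\mu_t - {}^*\mathcal{L}_t\mu_t, \mu_t)^2 = \sup_{f\in\D}\Big\{ \langle\dot\mu_t, f\rangle - \Exp^{\mu_t}[\mathcal{L}_t f] - \tfrac12\Exp^{\mu_t}[|\nabla_t f|^2_{t,x}]\Big\},
\]
where $\langle\dot\mu_t,f\rangle$ exists for Lebesgue a.e. $t$ by Lemma \ref{lem:abscontderiv}. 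I would then record the chain rule $\frac{d}{dt}\Exp^{\mu_t}[f(t,\cdot)] = \langle\dot\mu_t, f(t,\cdot)\rangle + \Exp^{\mu_t}[\partial_t f(t,\cdot)]$, valid for $f\in C^{2,1}_0(\R^d)$ and a.e. $t$.

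For the inequality $\geq$ in \eqref{eq:DG3}, fix $f\in C^{2,1}_0(\R^d)$; its time-slice $f(t,\cdot)\in\D$ is admissible in the pointwise supremum above, so substituting it and applying the chain rule gives, for a.e. $t$,
\[
\n(t,\dot\mu_t-{}^*\mathcal{L}_t\mu_t,\mu_t)^2 \geq \frac{d}{dt}\Exp^{\mu_t}[f(t,\cdot)] - \Exp^{\mu_t}\big[(\partial_t + \mathcal{L}_t) f + \tfrac12|\nabla_t f|^2_{t,x}\big].
\]
Integrating over $[r,s]$ telescopes the first term to $\Exp^{\mu_s}[f(s,\cdot)] - \Exp^{\mu_r}[f(r,\cdot)]$, and $\mu_r=\delta_y$ turns the second endpoint into $f(r,y)$; taking the supremum over $f\in C^{2,1}_0(\R^d)$ yields that the left side of \eqref{eq:DG3} dominates the right side.

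The reverse inequality is the main obstacle, and is where I would lean on \cite{dawson-gartner:87} (their Eq. 4.9). The difficulty is that the left side of \eqref{eq:DG3} integrates a pointwise-in-$t$ supremum over purely spatial $f\in\D$, whereas the right side optimizes over the smaller class of jointly smooth space-time functions $C^{2,1}_0(\R^d)$. I would (i) invoke the interchange of supremum and integral for the concave, jointly measurable integrand $f\mapsto \langle\dot\mu_t,f\rangle - \Exp^{\mu_t}[\mathcal{L}_t f] - \frac12\Exp^{\mu_t}[|\nabla_t f|^2_{t,x}]$ (a normal-integrand/measurable-selection argument), reducing the integrated pointwise supremum to a supremum over measurable selections $t\mapsto f_t\in\D$; and (ii) approximate such selections, in the seminorm underlying $L^2_{r,s,\nabla}(\mu)$, by gradients of functions in $C^{2,1}_0(\R^d)$, using precisely the density built into Definition \ref{defn:HilbertSpace1}. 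Separability of the test-function class then lets the supremum be realized along a countable dense family, which gives the asserted countable-dense-subset reduction.

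For the second half, assume the integral is finite, so that $\n(t,\dot\mu_t-{}^*\mathcal{L}_t\mu_t,\mu_t)^2<\infty$ for a.e. $t$; then the linear functional $L_t(f) := \langle\dot\mu_t - {}^*\mathcal{L}_t\mu_t, f\rangle$ is bounded on $\D$ relative to the seminorm $f\mapsto \Exp^{\mu_t}[|\nabla_t f|^2_{t,x}]^{1/2}$. Optimizing over the rescalings $f\mapsto\lambda f$ identifies $\n(t,\dot\mu_t-{}^*\mathcal{L}_t\mu_t,\mu_t)^2 = \frac12\|L_t\|_*^2$ as a Legendre transform, and Riesz representation in the closure of $\{\nabla_t f : f\in\D\}$ under the inner product $\Exp^{\mu_t}[(\cdot,\cdot)_{t,x}]$ produces $h^\mu(t,\cdot)$ with $L_t(f) = \Exp^{\mu_t}[(h^\mu(t,\cdot),\nabla_t f)_{t,x}]$ and $\n(t,\dot\mu_t-{}^*\mathcal{L}_t\mu_t,\mu_t)^2 = \frac12\Exp^{\mu_t}[|h^\mu(t,\cdot)|^2_{t,x}]$. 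Contracting the inner product, using $\sum_{i,j}(h^\mu)^i a_{ij}(\nabla_t f)^j = \sum_i (h^\mu)^i\partial_{x^i} f$, rewrites $L_t(f)$ as $\Exp^{\mu_t}[\sum_i (h^\mu)^i\partial_{x^i}f]$, whence $\langle\dot\mu_t,f\rangle = \langle\mu_t, \mathcal{L}_t f + \sum_i (h^\mu)^i\partial_{x^i}f\rangle = \langle {}^*\mathcal{K}_t\mu_t, f\rangle$, giving \eqref{eq:KidentityDG}; integrating the pointwise norm identity then yields \eqref{eq:hmuratefunction}. The last technical point I would verify is joint measurability of $(t,x)\mapsto h^\mu(t,x)$ and its square-integrability, so that $h^\mu\in L^2_{r,s,\nabla}(\mu)$; this follows from a measurable version of the Riesz representation combined with finiteness of the integral, and matches \cite{dawson-gartner:87} (Lemma 4.8).
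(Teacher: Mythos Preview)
The paper does not actually prove this proposition: it is explicitly announced as being ``taken (with some small modifications) from \cite{dawson-gartner:87}'', and the statement itself carries inline citations to their Eq.~4.9 and Lemma~4.8 for each assertion. So there is no proof in the paper to compare against; your sketch is, in effect, a reconstruction of the Dawson--G\"artner argument rather than a comparison target.

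That said, your outline is sound and tracks the standard route. The easy direction of \eqref{eq:DG3} via time-slicing $f\in C^{2,1}_0$ into the pointwise supremum and integrating is correct. For the reverse inequality you correctly isolate the genuine issue (pointwise-in-$t$ supremum over $\D$ versus a single supremum over space--time test functions) and name the two ingredients Dawson--G\"artner use: a normal-integrand/measurable-selection interchange of $\sup$ and $\int$, followed by approximation of measurable selections $t\mapsto f_t$ by gradients of $C^{2,1}_0$ functions in the $L^2_{r,s,\nabla}(\mu)$ seminorm. The Riesz-representation derivation of $h^\mu$, the contraction $\sum_{i,j}(h^\mu)^i a_{ij}(\nabla_t f)^j = \sum_i (h^\mu)^i\partial_{x^i}f$, and the resulting identities \eqref{eq:KidentityDG}--\eqref{eq:hmuratefunction} are exactly the content of \cite[Lemma~4.8]{dawson-gartner:87}. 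One small caveat: your appeal to Lemma~\ref{lem:abscontderiv} for the existence of $\dot\mu_t$ presupposes absolute continuity of $t\mapsto\mu_t$, which the proposition does not assume; in the paper this is supplied separately (via Proposition~\ref{theorem:DGbig}) before the result is applied, and when it fails both sides of \eqref{eq:DG3} are understood as $+\infty$.
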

 \begin{remark}
We reach \eqref{eq:DG1} from the proof of Lemma 9 in \cite[Eq 4.10]{dawson-gartner:87}. One should note also that \cite{dawson-gartner:87} write the relative entropy $R$ as $L^{(1)}_\nu$ in (their) equation (4.10). To reach \eqref{eq:DG2}, we also use the equivalence between (4.7) and (4.8) in \cite{dawson-gartner:87}.
 \end{remark}
\begin{proposition}\label{theorem:DGbig}
Assume that $\mu\in\M(\T)$, such that $\mu_{r} = \delta_y$ for some $y\in \R^d$ and $0\leq r<s\leq T$. If $R_{\F_{r,s}}(\mu ||P_{|r,y}) < \infty$, then $\mu$ is absolutely continuous on $[r,s]$, and \cite[Lemma 4.9]{dawson-gartner:87}
\begin{equation}
R_{\F_{r,s}}(\mu ||P_{|r,y}) \geq \int_r^s \n(t,\dot{\mu}_{t}-{}^*\mathcal{L}_t\mu_{t},\mu_{t})^2 dt.\label{eq:DG1}
\end{equation}
Here the derivative $\dot{\mu}_{t}$ is defined in Lemma \ref{lem:abscontderiv}. For all $f\in\D$,  \cite[Eq (4.35)]{dawson-gartner:87}
\begin{equation}
\Exp^{\mu_{s}}[f] -\log\Exp^{P_{s|r,y}}\left[\exp(f)\right] \leq \int_r^s \n(t,\dot{\mu}_{t}-{}^*\mathcal{L}_t\mu_t,\mu_t)^2 dt.\label{eq:DG2}
 \end{equation}
 \end{proposition}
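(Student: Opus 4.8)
The plan is to derive both inequalities from the identity \eqref{eq:DG3} of Proposition \ref{prop:intermediate} (valid, after restriction to $[r,s]$, for every $\mu$ with $\mu_r=\delta_y$, the left-hand side being read as the right-hand supremum when $\mu$ is not absolutely continuous). Inequality \eqref{eq:DG1} I would obtain by testing the variational definition of relative entropy against Cameron--Martin type exponentials; the absolute continuity of $\mu$ I would then extract from the finiteness of $\int_r^s\n(t,\dot{\mu}_t-{}^*\mathcal{L}_t\mu_t,\mu_t)^2\,dt$ (abbreviated $\int_r^s\n^2\,dt$ below) by conditioning; and \eqref{eq:DG2} I would obtain from the Hopf--Cole logarithmic transform.

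For \eqref{eq:DG1}, fix $g\in C^{2,1}_0(\R^d)$ and set $M^g_t:=g(t,X_t)-g(r,X_r)-\int_r^t(\partial_u+\mathcal{L}_u)g(u,X_u)\,du$. Since $P_{|r,y}$ solves the martingale problem for $\mathcal{L}$ started at $(r,y)$ and $g$ is compactly supported, $M^g$ is a bounded continuous $P_{|r,y}$-martingale with $\langle M^g\rangle_t=\int_r^t|\nabla_u g|^2_{u,X_u}\,du$, so $\exp(M^g-\tfrac12\langle M^g\rangle)$ is a nonnegative $P_{|r,y}$-local martingale of initial value $1$ and $\Exp^{P_{|r,y}}[\exp(M^g_s-\tfrac12\langle M^g\rangle_s)]\le 1$. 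The random variable $\Phi:=g(s,X_s)-g(r,y)-\int_r^s(\partial_u+\mathcal{L}_u)g(u,X_u)\,du-\tfrac12\int_r^s|\nabla_u g|^2_{u,X_u}\,du$ is bounded and $\F_{r,s}$-measurable and coincides with $M^g_s-\tfrac12\langle M^g\rangle_s$ both $P_{|r,y}$-a.s.\ and $\mu$-a.s.\ (using $X_r=y$, resp.\ $\mu_r=\delta_y$); hence $\log\Exp^{P_{|r,y}}[e^\Phi]\le 0$, while Fubini gives $\Exp^{\mu}[\Phi]=\Exp^{\mu_s}[g(s,\cdot)]-g(r,y)-\int_r^s\Exp^{\mu_t}[(\partial_t+\mathcal{L}_t)g+\tfrac12|\nabla_t g|^2]\,dt$. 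Definition \ref{eq:relativeentropy} then gives $R_{\F_{r,s}}(\mu\,||\,P_{|r,y})\ge\Exp^{\mu}[\Phi]$, and taking the supremum over $g\in C^{2,1}_0(\R^d)$ and applying \eqref{eq:DG3} yields \eqref{eq:DG1}.

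Next, assuming $R_{\F_{r,s}}(\mu\,||\,P_{|r,y})<\infty$ (so $\int_r^s\n^2\,dt<\infty$ by \eqref{eq:DG1}), I would prove absolute continuity as follows. Fix a compact $K$; for $f\in\D$ supported in $K$ with $\norm{f}_{C^2}\le 1$, both $|\nabla_t f|^2_{t,x}$ and $|\mathcal{L}_t f(x)|$ are bounded by a constant $C_K$. For $r\le u<v\le s$, conditioning on $\F_{r,u}$ and using Lemma \ref{lem:DonskVara} and the Markov property of $P$ gives $R_{\F_{r,v}}(\mu\,||\,P_{|r,y})=R_{\F_{r,u}}(\mu\,||\,P_{|r,y})+\Exp^{\mu}[R_{\F_{u,v}}(\mu_{|[r,u],x}\,||\,P_{|u,x_u})]$, and $\Psi(v):=R_{\F_{r,v}}(\mu\,||\,P_{|r,y})$ is nondecreasing and bounded (Theorem \ref{thm:DG}). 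Applying \eqref{eq:DG1} on $[u,v]$ to $\mu_{|[r,u],x}$ and optimising the resulting estimate over rescalings $\lambda f$ bounds $|\Exp^{\mu_{v|[r,u],x}}[f]-f(x_u)|$ by $\sqrt{2\,R_{\F_{u,v}}(\mu_{|[r,u],x}\,||\,P_{|u,x_u})}\,(\int_u^v\Exp^{\mu_{t|[r,u],x}}[|\nabla_t f|^2]\,dt)^{1/2}+C_K(v-u)$; integrating in $x$ (with $\Exp^{\mu}[\mu_{t|[r,u],x}]=\mu_t$) and Cauchy--Schwarz give $|\Exp^{\mu_v}[f]-\Exp^{\mu_u}[f]|\le C_K'\big((\Psi(v)-\Psi(u))^{1/2}(v-u)^{1/2}+(v-u)\big)$, and a last Cauchy--Schwarz over finite disjoint unions of intervals shows $t\mapsto\Exp^{\mu_t}[f]$ is absolutely continuous with a modulus controlled uniformly in such $f$; this is exactly the statement that $\mu$ is absolutely continuous on $[r,s]$.

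Finally, for \eqref{eq:DG2}, which is trivial if $\int_r^s\n^2\,dt=\infty$, assume this integral finite. Given $f\in\D$, the bounded function $v(t,x):=\Exp^{P_{s|t,x}}[e^f]$ solves $\partial_t v+\mathcal{L}_t v=0$, $v(s,\cdot)=e^f$, so $u:=\log v$ is bounded, has $u(s,\cdot)=f$ and $u(r,y)=\log\Exp^{P_{s|r,y}}[e^f]$, and solves the Hamilton--Jacobi equation $\partial_t u+\mathcal{L}_t u+\tfrac12|\nabla_t u|^2_{t,x}=0$. Approximating $u$ by a sequence $g_n\in C^{2,1}_0(\R^d)$ (mollified and spatially truncated) and inserting into \eqref{eq:DG3}, the integral term converges to $\int_r^s\Exp^{\mu_t}[(\partial_t+\mathcal{L}_t)u+\tfrac12|\nabla_t u|^2]\,dt=0$ and the boundary terms to $\Exp^{\mu_s}[f]-u(r,y)$, giving \eqref{eq:DG2}. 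The main obstacle is entirely technical and concentrated here (and, secondarily, in the previous step): one must control the cutoff errors $u\,(\partial_t+\mathcal{L}_t)\chi_n$ and $(\nabla\chi_n,\nabla_t u)_{t,x}$, which needs an a priori bound on $\int_r^s\Exp^{\mu_t}[|\nabla_t u|^2_{t,x}]\,dt$ --- available from $\int_r^s\n^2\,dt<\infty$ through the representation $\dot{\mu}_t={}^*\mathcal{K}_t\mu_t$ of Proposition \ref{prop:intermediate} --- together with some decay of $u$ away from $\mathrm{supp}(f)$, and one must justify the $C^{1,2}$ regularity of $v$ up to time $s$ when $a,b$ are only continuous (argue on $[r,s']$ with $s'\uparrow s$, or mollify the coefficients and invoke uniqueness of the martingale problem). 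Everything else reduces to routine estimates and applications of Fubini and Cauchy--Schwarz.
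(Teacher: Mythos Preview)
The paper does not give its own proof of this proposition: both Proposition~\ref{prop:intermediate} and Proposition~\ref{theorem:DGbig} are quoted from \cite{dawson-gartner:87}, and the accompanying Remark merely records where in that reference the statements are to be found (their Lemma~4.9 and equation~(4.35), together with the equivalence of their (4.7) and (4.8)). So there is no in-paper argument to compare against beyond the citation.

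Your outline is in fact a faithful reconstruction of the Dawson--G\"artner proof. The exponential-martingale lower bound for \eqref{eq:DG1} --- testing the variational definition of $R$ on $\Phi=M^g_s-\tfrac12\langle M^g\rangle_s$ and using that $\Exp^{P_{|r,y}}[e^\Phi]\le 1$ --- is exactly their device behind (4.10), and the Hopf--Cole substitution $u=\log\Exp^{P_{s|t,\cdot}}[e^f]$ for \eqref{eq:DG2} is their route to (4.35). Your absolute-continuity argument via the increment function $\Psi(v)=R_{\F_{r,v}}(\mu\,\|\,P_{|r,y})$ and conditioning is a mild repackaging but is correct: the bound
\[
|\Exp^{\mu_v}[f]-\Exp^{\mu_u}[f]|\le C_K'\Big((\Psi(v)-\Psi(u))^{1/2}(v-u)^{1/2}+(v-u)\Big)
\]
follows as you say, and since $\Psi$ is nondecreasing and bounded by $R_{\F_{r,s}}(\mu\,\|\,P_{|r,y})<\infty$, a Cauchy--Schwarz over finite disjoint unions yields the uniform modulus required by the definition of absolute continuity.

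The caveats you flag in the last paragraph are genuine and are precisely where \cite{dawson-gartner:87} spend their effort: the spatial cutoff of $u=\log v$ is controlled via the Feynman--Kac representation (which gives decay of $u$ and $\nabla_t u$ away from $\mathrm{supp}\,f$), and the $C^{1,2}$ regularity of $v$ under merely continuous $(a,b)$ is handled by interior parabolic estimates on $[r,s')$ and letting $s'\uparrow s$. Your proposal is therefore correct as a strategic outline, with the understanding that completing those two technical points is the substantive content of the cited lemma rather than something left undone.
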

We are now ready to prove Theorem \ref{thm:ratefunctionfinal} (the central result).
\begin{proof}
Fix a partition $\sigma = \lbrace t_1,\ldots,t_{m}\rbrace$. We may conclude from \eqref{eq:I2ident1} and \eqref{eq:DG1} that, 
\begin{multline}
R\left(\mu || P\right) \geq R_0\left(\mu || P\right) + \\ \sum_{j=1}^{m-1}E^{\mu_{[0,t_j]}(x)}\int_{t_j}^{t_{j+1}} \n(t,\dot{\mu}_{t | [0,t_j],x}-{}^*\mathcal{L}_t\mu_{t|[0,t_j],x},\mu_{t|[0,t_j],x})^2 dt.
\end{multline}
The integrand on the right hand side is measurable with respect to $E^{\mu_{[0,t_j]}(x)}$ due to the equivalent expression \eqref{eq:DG3}. We may infer from \eqref{eq:DG2} that,
\begin{multline}
E^{\mu_{[0,t_j]}(x)}\int_{t_j}^{t_{j+1}} \n(t,\dot{\mu}_{t | [0,t_j],x}-{}^*\mathcal{L}_t\mu_{t|[0,t_j],x},\mu_{t|t_j,x})^2 dt \\ \geq \Exp^{\mu_{[0,t_j]}(x)}\left[ \sup_{f\in\D}\left\lbrace  \Exp^{\mu_{t_{j+1}|[0,t_j],x}}[f] -\log\Exp^{P_{t_{j+1}|t_j,x_{t_j}}}\left[\exp(f)\right]\right\rbrace\right]\\
\\ = \Exp^{\mu_{[0,t_j]}(x)}\left[ \sup_{f\in C_b(\R^d)}\left\lbrace \Exp^{\mu_{t_{j+1}|[0,t_j],x}}[f] -\log\Exp^{P_{t_{j+1}|t_j,x_{t_j}}}\left[\exp(f)\right]\right\rbrace\right].\label{eq:tmptmp}
\end{multline}
This last step follows by noting that if $\nu\in\M(\R^d)$, and $f\in C_b(\R^d)$, and the expectation of $f$ with respect to $\nu$ is finite, then there exists a series $(K_n)\subset \R^d$ of compact sets such that 
\begin{equation*}
\int_{\R^d}f(x)d\nu(x) = \lim_{n\to\infty}\int_{K_n}f(x)d\nu(x).
\end{equation*}
In turn, for each $n$ there exist $(f_n^{(m)})\in \D_{K_n}$ such that we may write
\begin{equation*}
\int_{K_n}f(x)d\nu(x) = \lim_{m\to\infty}\int_{K_n}f_n^{(m)}(x)d\nu(x).
\end{equation*}
This allows us to conclude that the two supremums are the same. The last expression in \eqref{eq:tmptmp} is merely
\begin{equation*}
E^{\mu_{[0,t_j]}(x)}\left[R_{t_{j+1}}\left( \mu_{t_{j+1} | [0,t_j],x}||P_{t_{j+1}|t_j,x_{t_j}}\right)\right].
\end{equation*}
By \eqref{eq:I2ident4}, this is greater than or equal to
\begin{equation*}
E^{\mu_{\sigma_{;j}}(y)}\left[R_{t_{j+1}}\left(\mu_{t_{j+1}|\sigma_{;j},y}||P_{t_{j+1}|t_j,y_{t_j}}\right)\right].
\end{equation*}
We thus obtain the theorem using \eqref{eq:I2ident2}. 
\end{proof}
\section{Some Corollaries}\label{Sect:Particular}
We state some corollaries of Theorem \ref{thm:ratefunctionfinal}. In the course of this section we make progressively stronger assumptions on the nature of $\mu$, culminating in the elegant expression for $R(\mu || P)$ when $\mu$ is a solution of a martingale problem. We finish by comparing our work with that of \cite{budhiraja-dupuis-etal:12,fischer:12}.
\begin{corollary}
Suppose that $\mu\in\M(\T)$ and $R(\mu || P) < \infty$. Then for all $s$ and $\mu$ a.e. $x$, $\mu_{|[0,s],x}$ is absolutely continuous over $[s,T]$. For each $s\in [0,T]$ and $\mu$ a.e. $x\in\T$, for Lebesgue a.e. $t\geq s$
\begin{equation}
\dot{\mu}_{t|[0,s],x} = {}^*\mathcal{K}^\mu_{t|s,x}\mu_{t|[0,s],x}\label{eq:temp10}
\end{equation}
where for some $h^{\mu}_{s,x} \in L^{2}_{s,T,\nabla}(\mu_{|[0,s],x})$
\begin{equation}
 \mathcal{K}^\mu_{t|s,x}f(y) = \mathcal{L}_t f(y) + \sum_{j=1}^d h^{\mu,j}_{s,x}(t,y)\frac{\partial f}{\partial y^j}(y).\label{eq:temp11}
\end{equation}
Furthermore,
\begin{equation}
R\left(\mu ||P\right) =  R_0(\mu || P) +\frac{1}{2} \sup_{\sigma\in\J_*}\int_{0}^{T}E^{\mu(w)}E^{\mu_{t|[0,\underline{\sigma}(t)],w}(z)}\left[ \left| h^\mu_{\underline{\sigma}(t),w}(t,z)\right|_{t,z}^2\right] dt.
\label{eq:temp7}
\end{equation}
For any dense countable subset $\mathcal{Q}_{0,T}$ of $[0,T]$, there exists a series of partitions $\sigma^{(n)}\subset\sigma^{(n+1)}\in\mathcal{Q}_{0,T}$, such that as $n\to\infty$, $|\sigma^{(n)}|\to 0$, and
\begin{equation}\label{eq:corollaryR}
R\left(\mu ||P\right) =  R_0(\mu || P) + \frac{1}{2}\lim_{n\to\infty}\int_{0}^{T}E^{\mu(w)}E^{\mu_{t|[0,\underline{\sigma}^{(n)}(t)],w}(z)}\left[ \left| h^\mu_{\underline{\sigma}^{(n)}(t),w}(t,z)\right|_{t,z}^2\right] dt.
\end{equation}
\end{corollary}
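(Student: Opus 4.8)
The plan is to feed the finiteness assumption $R(\mu\|P)<\infty$ through the machinery of Theorem~\ref{thm:ratefunctionfinal} and Proposition~\ref{theorem:DGbig}, conditioning on an initial path segment $[0,s]$. First I would fix $s$ and invoke identity \eqref{eq:I2ident1} with a partition having $s$ as one of its nodes; since all the summands on the right are nonnegative and their total is bounded by $R(\mu\|P)<\infty$, each term $E^{\mu_{[0,s]}(x)}[R_{\F_{s,t_{j+1}}}(\mu_{|[0,s],x}\|P_{|s,x_s})]$ is finite, whence for $\mu$ a.e.\ $x$ we have $R_{\F_{s,t_{j+1}}}(\mu_{|[0,s],x}\|P_{|s,x_s})<\infty$. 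Applying this on a sequence $t_{j+1}\uparrow T$ (or simply taking $t_{j+1}=T$) and using \eqref{eq:I2smallident}, we get $R_{\F_{s,T}}(\mu_{|[0,s],x}\|P_{|s,x_s})<\infty$ for $\mu$ a.e.\ $x$. Now $\mu_{|[0,s],x}$, viewed as a measure on $C([s,T];\R^d)$, satisfies $(\mu_{|[0,s],x})_s=\delta_{x_s}$, so Proposition~\ref{theorem:DGbig} (with $r=s$, $y=x_s$) applies directly: $\mu_{|[0,s],x}$ is absolutely continuous on $[s,T]$, its distributional derivative $\dot\mu_{t|[0,s],x}$ exists for Lebesgue a.e.\ $t\geq s$, and $\int_s^T\n(t,\dot\mu_{t|[0,s],x}-{}^*\mathcal{L}_t\mu_{t|[0,s],x},\mu_{t|[0,s],x})^2\,dt<\infty$. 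Then Proposition~\ref{prop:intermediate} gives \eqref{eq:temp10} and \eqref{eq:temp11}, with $h^\mu_{s,x}\in L^2_{s,T,\nabla}(\mu_{|[0,s],x})$ satisfying the norm identity \eqref{eq:hmuratefunction}.

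For \eqref{eq:temp7}, I would revisit the proof of Theorem~\ref{thm:ratefunctionfinal}: there the chain of inequalities runs from $R(\mu\|P)$ down through the sum of $E^{\mu_{[0,t_j]}(x)}\int_{t_j}^{t_{j+1}}\n(\cdots)^2\,dt$ terms, and then down to the final $\sigma$-dependent expression via \eqref{eq:I2ident4}. The point is that when $R(\mu\|P)<\infty$ these inequalities can be reversed to equalities in the appropriate limit. Concretely: by \eqref{eq:I2ident1} and the $\n^2$-equality \eqref{eq:DG3} applied to each $\mu_{|[0,t_j],x}$ (legitimate now because each conditional relative entropy is finite, hence Proposition~\ref{prop:intermediate} applies),
\begin{equation*}
R(\mu\|P) = R_0(\mu\|P) + \sum_{j=1}^{m(\sigma)-1}E^{\mu_{[0,t_j]}(x)}\int_{t_j}^{t_{j+1}}\n(t,\dot\mu_{t|[0,t_j],x}-{}^*\mathcal{L}_t\mu_{t|[0,t_j],x},\mu_{t|[0,t_j],x})^2\,dt
\end{equation*}
for \emph{every} $\sigma\in\J_*$ --- wait, this is not literally an equality for every $\sigma$, only an inequality; so instead I would argue: \eqref{eq:I2ident1} is an exact identity with $R_{\F_{t_j,t_{j+1}}}(\mu_{|[0,t_j],x}\|P_{|t_j,x_{t_j}})$ as summands, and by \eqref{eq:DG1} each such summand dominates the corresponding $\int\n^2$. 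As the mesh $|\sigma|\to 0$ along a refining sequence, the sum of $\int\n^2$ terms, rewritten via \eqref{eq:hmuratefunction} as $\tfrac12\int_0^T E^{\mu(w)}E^{\mu_{t|[0,\underline\sigma(t)],w}(z)}[|h^\mu_{\underline\sigma(t),w}(t,z)|^2_{t,z}]\,dt$, is monotone nondecreasing in $\sigma$ (this monotonicity is exactly Lemma~\ref{lem:technical1}, transported through the $\n^2$/entropy correspondence) and bounded above by $R(\mu\|P)-R_0(\mu\|P)$. The matching lower bound comes from the other direction of Theorem~\ref{thm:ratefunctionfinal}'s proof: \eqref{eq:I2ident2} expresses $R(\mu\|P)-R_0(\mu\|P)$ as $\sup_\sigma$ of a sum of $E^{\mu_{\sigma_{;j}}}[R_{t_{j+1}}(\cdots)]$ terms, and combining \eqref{eq:I2ident4} with \eqref{eq:DG2} (Proposition~\ref{theorem:DGbig}) shows each such term is bounded above by the corresponding $\int\n^2$ contribution with the finer conditioning. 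Taking the supremum over $\sigma$ and matching the two bounds yields \eqref{eq:temp7}.

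Finally, \eqref{eq:corollaryR} follows from \eqref{eq:temp7} by the same device already used in Theorem~\ref{thm:DG}: the supremum over $\J_*$ of a quantity monotone under refinement is attained as a limit along any refining sequence of partitions drawn from a countable dense set $\mathcal{Q}_{0,T}$ with $|\sigma^{(n)}|\to 0$. One needs the restriction-to-$\mathcal{Q}_{0,T}$ claim from Theorem~\ref{thm:DG} (equations \eqref{eq:projlimit1}--\eqref{eq:projlimit3}) to know that dense nodes suffice, plus monotone convergence to pass the limit inside $E^{\mu(w)}$ and the $dt$ integral.

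The main obstacle I anticipate is the measurability and integrability bookkeeping in the conditional objects: one must check that $x\mapsto h^\mu_{s,x}$ (equivalently $(t,z,x)\mapsto h^\mu_{s,x}(t,z)$) is jointly measurable so that the iterated expectation $E^{\mu(w)}E^{\mu_{t|[0,\underline\sigma(t)],w}(z)}[\cdots]$ in \eqref{eq:temp7} and \eqref{eq:corollaryR} makes sense, and that the "for $\mu$ a.e.\ $x$" exceptional sets coming from Proposition~\ref{theorem:DGbig} can be chosen uniformly enough (over the countable partition family and a dense set of times) to assemble a single full-measure statement. This is the kind of regular-conditional-probability disintegration argument that is routine in spirit --- using \eqref{eq:DG3} to express $\n^2$ as a supremum of a countable family of measurable functionals, hence measurable in $x$ --- but tedious to write out; I would handle it by working throughout with the countable-dense-subset versions of all the suprema, as the excerpt already does.
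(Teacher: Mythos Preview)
Your proposal is correct and follows essentially the same route as the paper: use \eqref{eq:I2ident1} and finiteness of $R(\mu\|P)$ to get finiteness of the conditional entropies, apply Propositions~\ref{theorem:DGbig} and~\ref{prop:intermediate} to obtain absolute continuity and the representation \eqref{eq:temp10}--\eqref{eq:temp11}, handle measurability via the variational form \eqref{eq:DG3}, and invoke Theorem~\ref{thm:DG} for the final limit statement.

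The one place you work harder than necessary is \eqref{eq:temp7}. You re-derive it by sandwiching between an upper bound (from \eqref{eq:I2ident1} and \eqref{eq:DG1}) and a lower bound (from \eqref{eq:I2ident2}, \eqref{eq:I2ident4}, \eqref{eq:DG2}), effectively re-running the proof of Theorem~\ref{thm:ratefunctionfinal}. The paper is more direct: Theorem~\ref{thm:ratefunctionfinal} already gives \eqref{eq:bigresult} as an \emph{equality}, and the inner $\sup_{f\in\D}\{\cdots\}$ is by definition $\n(t,\dot\mu_{t|[0,\underline\sigma(t)],x}-{}^*\mathcal{L}_t\mu_{t|[0,\underline\sigma(t)],x},\mu_{t|[0,\underline\sigma(t)],x})^2$; substituting \eqref{eq:temp10} and \eqref{eq:hmuratefunction} converts this to the $h^\mu$ form in one step, and then Fubini (justified by the measurability argument you already identified) swaps $E^{\mu(x)}$ and $\int_0^T$. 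Your detour is harmless but avoidable.
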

\begin{remark}
It is not immediately clear that we may simplify \eqref{eq:temp7} further (barring further assumptions). The reason for this is that  we only know that \newline $E^{\mu_{|[0,\underline{\sigma}(t)],w}(z)}\left[ \left| h^\mu_{\underline{\sigma}(t),w}(t,z)\right|_{t,z}^2\right]$ is measurable (as a function of $w$), but it has not been proven that $h^\mu_{\underline{\sigma}(t),w}(t,z)$ is measurable (as a function of $w$).
\end{remark}
\begin{proof}
Let $\sigma = \lbrace 0=t_1,\ldots,t_m=T\rbrace$ be an arbitrary partition. For all $j<m$, we find from Lemma \ref{lem:part1} that  $R_{\F_{t_j,t_{j+1}}}\left( \mu_{| [0,t_j],x} || P_{|t_j,x_{t_j}}\right) < \infty$ for $\mu_{[0,t_j]}$ a.e. $x\in C([0,t_j];\R^d)$. We thus find that, for all such $x$, $\mu_{|[0,t_j],x}$ is absolutely continuous on $[t_j,t_{j+1}]$ from Proposition \ref{theorem:DGbig}. We are then able to obtain \eqref{eq:temp10} and \eqref{eq:temp11} from Propositions \ref{prop:intermediate} and \ref{theorem:DGbig}.
From  \eqref{eq:hmuratefunction}, \eqref{eq:bigresult} and \eqref{eq:temp10} we find that
\begin{equation}
R\left(\mu || P\right) =  R_0(\mu || P) + \frac{1}{2}\sup_{\sigma\in\J_*}E^{\mu(x)}\int_{0}^{T}E^{\mu_{t|[0,\underline{\sigma}(t)],x}(z)}\left[\left| h^\mu_{\underline{\sigma}(t),x}(t,z)\right|_{t,z}^2\right] dt.
\label{eq:temp12}
\end{equation}
The above integral must be finite (since we are assuming $R(\mu || P)$ is finite). Furthermore $E^{\mu_{t|[0,\underline{\sigma}(t)],x}(z)}\left[\left| h^\mu_{\underline{\sigma}(t),x}(t,z)\right|_{t,z}^2\right]$ is $(t,x)$ measurable as a consequence of the equivalent form \eqref{eq:DG3}. This allows us to apply Fubini's Theorem to obtain \eqref{eq:temp7}. The last statement on the sequence of maximising partitions follows from Theorem \ref{thm:DG}.
\end{proof}
\begin{corollary}\label{cor:last}
Suppose that $R(\mu|| P) < \infty$. Suppose that for all $s\in\mathcal{Q}_{0,T}$ (any countable, dense subset of $[0,T]$), for $\mu$ a.e. $x$ and Lebesgue a.e. $t$, $h^{\mu}_{s,x}(t,x_t) = E^{\mu_{|[0,s],x;t,x_t}(w)}h^{\mu}(t,w)$ for some progressively-measurable random variable $h^{\mu}:[0,T]\times\T \to \R^d$. Then
\begin{equation*}
R\left(\mu ||P\right) =  R_0(\mu || P) +\frac{1}{2}\int_{0}^{T}E^{\mu(w)}\left[ \left| h^\mu(t,w)\right|_{t,w_t}^2\right] dt.
\end{equation*}
\end{corollary}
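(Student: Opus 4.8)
The plan is to start from the previous corollary, in particular identity \eqref{eq:temp7}, which expresses $R(\mu\|P)$ as $R_0(\mu\|P) + \tfrac12\sup_{\sigma\in\J_*}\int_0^T E^{\mu(w)}E^{\mu_{t|[0,\underline\sigma(t)],w}(z)}[|h^\mu_{\underline\sigma(t),w}(t,z)|^2_{t,z}]\,dt$, and to show that the hypothesis $h^\mu_{s,x}(t,x_t) = E^{\mu_{|[0,s],x;t,x_t}(w)}h^\mu(t,w)$ forces each of these inner expectations to collapse onto $E^{\mu(w)}[|h^\mu(t,w)|^2_{t,w_t}]$, independently of $\sigma$, so the supremum is vacuous. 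First I would fix a partition $\sigma\in\J_*$ and a time $t$, write $s = \underline\sigma(t)$, and expand the term $E^{\mu(w)}E^{\mu_{t|[0,s],w}(z)}[|h^\mu_{s,w}(t,z)|^2_{t,z}]$. Using the tower property of regular conditional probabilities (conditioning first on $\F_{0,s}$ and then additionally on $X_t = z$), this equals $E^{\mu_{[0,s]}(x)}E^{\mu_{t|[0,s],x}(z)}[|h^\mu_{s,x}(t,z)|^2_{t,z}]$, and by hypothesis the integrand is $|E^{\mu_{|[0,s],x;t,z}(w)}h^\mu(t,w)|^2_{t,z}$.

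The key step is then a conditional Jensen's inequality applied to the convex function $v \mapsto |v|^2_{t,z}$ (which is a genuine positive-definite quadratic form since $a(t,z)$ is strictly positive definite): for each $(x,z)$,
\begin{equation*}
\left| E^{\mu_{|[0,s],x;t,z}(w)}h^\mu(t,w)\right|^2_{t,z} \leq E^{\mu_{|[0,s],x;t,z}(w)}\left[\left|h^\mu(t,w)\right|^2_{t,z}\right].
\end{equation*}
Note that on the right-hand side the subscript is still $(t,z)$, but under $\mu_{|[0,s],x;t,z}$ we have $w_t = z$ almost surely, so $|h^\mu(t,w)|^2_{t,z} = |h^\mu(t,w)|^2_{t,w_t}$ there. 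Averaging this back up over $\mu_{t|[0,s],x}$ and then over $\mu_{[0,s]}$ reassembles the full measure $\mu$, giving
\begin{equation*}
E^{\mu(w)}E^{\mu_{t|[0,s],w}(z)}\left[\left|h^\mu_{s,w}(t,z)\right|^2_{t,z}\right] \leq E^{\mu(w)}\left[\left|h^\mu(t,w)\right|^2_{t,w_t}\right].
\end{equation*}
Integrating in $t$ and taking the supremum over $\sigma$ shows $R(\mu\|P) \leq R_0(\mu\|P) + \tfrac12\int_0^T E^{\mu(w)}[|h^\mu(t,w)|^2_{t,w_t}]\,dt$.

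For the reverse inequality I would use the last clause of the previous corollary, \eqref{eq:corollaryR}, which provides a sequence of refining partitions $\sigma^{(n)}$ with $|\sigma^{(n)}|\to 0$ realizing the supremum as a limit. Along this sequence, $s_n := \underline{\sigma}^{(n)}(t) \uparrow t$ for each fixed $t$, so the conditioning $\sigma$-algebras $\F_{0,s_n}$ increase up to $\F_{0,t}$, and $\mu_{t|[0,s_n],w}\to \delta_{w_t}$ weakly (for $\mu$-a.e.\ $w$) by continuity of paths. Under this limit the conditional expectation $E^{\mu_{|[0,s_n],x;t,z}(w)}h^\mu(t,w)$ converges, via the martingale convergence theorem for the increasing family $\F_{0,s_n} \vee \sigma(X_t)$, to the projection of $h^\mu(t,\cdot)$ onto $\F_{0,t}$, and progressive measurability of $h^\mu$ together with the fact that $z$ concentrates at $w_t$ means Jensen's inequality becomes an equality in the limit; hence the limit of the inner expectations is exactly $E^{\mu(w)}[|h^\mu(t,w)|^2_{t,w_t}]$. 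A dominated-convergence argument in $t$ (the integrand is bounded in $L^1(dt)$ since $R(\mu\|P)<\infty$, and monotone in $n$ along the refining sequence) then upgrades the pointwise-in-$t$ limit to convergence of the integrals, yielding the matching lower bound.

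The main obstacle is the limiting argument for the reverse inequality: making rigorous that $E^{\mu_{|[0,s_n],x;t,z}(w)}h^\mu(t,w) \to h^\mu(t,w)$ in the appropriate averaged sense as $s_n\uparrow t$, and that the residual Jensen gap vanishes. This requires care because $h^\mu$ is only defined $dt\otimes\mu_t(dx)$-a.e.\ and only progressively measurable, so one cannot evaluate it pathwise at will; the clean way is to argue in $L^2(dt\otimes\mu)$ using that the maps $(w,u)\mapsto \nabla_u f$ with $f\in C^{2,1}_0$ are dense in $L^2_{0,T,\nabla}(\mu)$ (Definition \ref{defn:HilbertSpace1}) and that the nested conditional expectations are $L^2$-contractions converging strongly to the identity on the relevant closed subspace. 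The forward inequality, by contrast, is essentially just Jensen and the tower property and should be routine.
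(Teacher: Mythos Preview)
Your proposal is correct and follows essentially the same architecture as the paper: Jensen's inequality for the upper bound, martingale convergence along the refining partitions $\sigma^{(n)}$ for the lower bound. The upper-bound argument is identical to the paper's.

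For the lower bound the paper takes a slightly simpler route than you do. After rewriting \eqref{eq:corollaryR} in the form
\[
R(\mu\|P) = R_0(\mu\|P) + \tfrac12\lim_{n\to\infty}\int_0^T E^{\mu(w)}\Bigl[\,\bigl|E^{\mu_{|[0,\underline\sigma^{(n)}(t)],w;t,w_t}(v)}h^\mu(t,v)\bigr|^2_{t,w_t}\Bigr]dt,
\]
the paper simply applies Fatou's lemma to push the $\liminf$ inside the $dt\otimes\mu$-integral, and then invokes the martingale convergence result \cite[Cor.~2.4]{revuz-yor:91} (exactly as you suggest) to identify the pointwise limit as $|h^\mu(t,w)|^2_{t,w_t}$. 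This sidesteps precisely what you flag as the ``main obstacle'': with Fatou you do not need dominated convergence, nor do you need to argue that the Jensen gap vanishes in any uniform sense---pointwise a.e.\ convergence of the integrand is enough. Your alternative, using monotonicity in $n$ (which does hold, by conditional Jensen and the tower property along the increasing $\sigma$-algebras), would also work, but Fatou is the cleaner tool here.
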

\begin{proof}
Let $\mathcal{G}^{s,x;t,y}$ be the sub $\sigma$-algebra consisting of all $B\in \mathcal{B}(\mathcal{T})$ such that for all $w\in B$, $w_r = x_r$ for all $r\leq s$ and $w_t = y$. Thus $h^{\mu}_{s,x}(t,y) = E^{\mu_{|[0,s],x;t,y}(w)}h^{\mu}(t,w) = E^{\mu}\left[h^{\mu}(t,\cdot)|\mathcal{G}^{s,x;t,y}\right]$. By \cite[Corollary 2.4]{revuz-yor:91}, since \newline$\cap_{s< t} \mathcal{G}^{s,x;t,x_t}= \mathcal{G}^{t,x;t,x_t}$ (restricting to $s\in\mathcal{Q}_{0,T}$), for $\mu$ a.e. $x$, 
\begin{equation}\label{eq:revuz}
\lim_{s\to t^{-}}E^{\mu_{|[0,s],x;t,x_t}(w)}h^{\mu}(t,w)= h^{\mu}(t,x),
\end{equation}
where $s\in\mathcal{Q}_{0,T}$. By the properties of the regular conditional probability, we find from \eqref{eq:corollaryR} that
\begin{multline}
R\left(\mu ||P\right) =  R_0(\mu || P) +\\ \frac{1}{2}\lim_{n\to\infty}\int_{0}^{T}E^{\mu(w)}\left[ \left| E^{\mu_{|[0,\underline{\sigma}^{(n)}(t)],w;t,w_t}(v)} \left[h^\mu(t,v) \right]\right|_{t,w_t}^2\right] dt.\label{eq:entropytmp4}
\end{multline}
By assumption, the above limit is finite. Thus by Fatou's Lemma, and using the properties of the regular conditional probability,
\begin{multline*}
R\left(\mu ||P\right) \geq R_0(\mu || P) + \\ \frac{1}{2}\int_{0}^{T}E^{\mu(w)}\left[ \linf{n}\left| E^{\mu_{|[0,\underline{\sigma}^{(n)}(t)],w;t,w_t}(v)} \left[h^\mu(t,v) \right]\right|_{t,w_t}^2\right] dt.
\end{multline*}
Through use of \eqref{eq:revuz}, 
\begin{equation*}
R\left(\mu ||P\right) \geq  R_0(\mu || P) +\frac{1}{2}\int_{0}^{T}E^{\mu(w)}\left[ \left| h^\mu(t,w)\right|_{t,w_t}^2\right] dt.
\end{equation*}
Conversely, through an application of Jensen's Inequality to \eqref{eq:entropytmp4}
\begin{equation*}
R\left(\mu ||P\right) \leq  R_0(\mu || P) + \frac{1}{2}\lim_{n\to\infty}\int_{0}^{T}E^{\mu(w)}\left[E^{\mu_{|[0,\underline{\sigma}^{(n)}(t)],w;t,w_t}(v)} \left[\left| h^\mu(t,v)\right|_{t,w_t}^2\right]\right] dt.
\end{equation*}
A property of the regular conditional probability yields
\begin{equation*}
R\left(\mu ||P\right) \leq  R_0(\mu || P) +\frac{1}{2}\int_{0}^{T}E^{\mu(w)}\left[ \left| h^\mu(t,w)\right|_{t,w_t}^2\right] dt.
\end{equation*}
\end{proof}
\begin{remark}
The condition in the above corollary is satisfied when $\mu$ is a solution to a Martingale Problem - see Lemma \ref{Thm:final}.
\end{remark}

We may further simplify the expression in Theorem \ref{thm:ratefunctionfinal} when $\mu$ is a solution to the following Martingale Problem. Let $\lbrace c^{jk},e^j\rbrace$ be  progressively-measurable functions $[0,T]\times \T \to\R$. We suppose that $c^{jk} = c^{kj}$. For all $1\leq j,k \leq d$, $c^{jk}(t,x)$ and $e^j(t,x)$ are assumed to be bounded for $x\in L$ (where $L$ is compact) and all $t\in [0,T]$. For $f\in C_0^{2}(\R^d)$ and $x\in\T$, let
\begin{equation*}
\mathcal{M}_u(f)(x) = \sum_{1\leq j,k\leq d}c^{jk}(u,x) \frac{\partial^2 f}{\partial y^j \partial y^k}(x_u) + \sum_{1\leq j\leq d}e^j(u,x)\frac{\partial f}{\partial y^j}(x_u).
\end{equation*}
We assume that for all such $f$, the following is a continuous martingale (relative to the canonical filtration) under $\mu$
\begin{equation}
f(X_t) - f(X_0) - \int^t_0\mathcal{M}_u f(X)du.\label{eq:mumartingale}
\end{equation}
The law governing $X_0$ is stipulated to be $\nu \in \M(\R^d)$. 

From now on we switch from our earlier convention and we consider $\mu_{|[0,s],x}$ to be a measure on $\T$ such that, for $\mu$ a.e. $x\in\T$, $\mu_{|[0,s],x}(A_{s,x}) = 1$ where $A_{s,x}$ is the set of all $X\in\T$ satisfying $X_t = x_t$ for all $0\leq t\leq s$. This is a property of a regular conditional probability (see \cite[Theorem 3.18]{karatzas-shreve:91}). Similarly, $\mu_{|s,x;t,y}$ is considered to be a measure on $\T$ such that for $\mu$ a.e. $x\in\T$, $\mu_{|s,x;t,y}(B_{s,x;t,y}) = 1$, where $B_{s,x;t,y}$ is the set of all $X\in A_{s,x}$ such that $X_t = y$. We may apply Fubini's Theorem (since $f$ is compactly supported and bounded) to \eqref{eq:mumartingale} to find that
\begin{equation*}
\langle \mu_{t|[0,s],x},f\rangle - f(x_s) = \int^t_s \Exp^{\mu_{|[0,s],x}}\left[\mathcal{M}_u f\right] du.
\end{equation*}
This ensures that $\mu_{|[0,s]}$ is absolutely continuous over $[s,T]$, and that
\begin{equation}
\langle \dot{\mu}_{t|[0,s],x},f\rangle =\Exp^{\mu_{|[0,s],x}}\left[\mathcal{M}_t f\right].\label{eq:Mderivative}
\end{equation}
\begin{lemma}\label{Thm:final}
If $R(\mu || P) < \infty$ then for Lebesgue a.e. $t\in[0,T]$ and $\mu$ a.e.  $x\in\T$,
\begin{equation}
a(t,x_t) =  c(t,x).\label{eq:aequalc}
\end{equation}
If $R(\mu || P) < \infty$ then
\begin{equation}
R(\mu||P) = R(\nu || \mu_I) +  \frac{1}{2}\Exp^{\mu(x)}\left[\int_0^T \left| b(s,x_s) -e(s,x)\right|_{s,x_s}^2ds\right].\label{eq:lastidentity}
\end{equation}
\end{lemma}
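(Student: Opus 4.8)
The plan is to apply Corollary \ref{cor:last} together with the explicit form of the derivative $\dot{\mu}_{t|[0,s],x}$ computed in \eqref{eq:Mderivative}. First I would establish \eqref{eq:aequalc}. Since $R(\mu||P) < \infty$, the Corollary preceding Corollary \ref{cor:last} tells us that for $\mu$ a.e.\ $x$ and Lebesgue a.e.\ $t \geq s$, $\dot{\mu}_{t|[0,s],x} = {}^*\mathcal{K}^\mu_{t|s,x}\mu_{t|[0,s],x}$ with $\mathcal{K}^\mu_{t|s,x}f = \mathcal{L}_t f + \sum_j h^{\mu,j}_{s,x}(t,\cdot)\partial f/\partial x^j$. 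On the other hand, \eqref{eq:Mderivative} gives $\langle \dot{\mu}_{t|[0,s],x},f\rangle = \Exp^{\mu_{|[0,s],x}}[\mathcal{M}_t f]$. Taking $s \to t^-$ along $\mathcal{Q}_{0,T}$ (so that the conditioning collapses to conditioning on $X_r = x_r$, $r \leq t$) and equating the two expressions for the distributional derivative, one gets, for $f \in \D$, an identity of the form
\begin{equation*}
\sum_{j,k} E^{\mu_{|[0,t],x}}\left[c^{jk}(t,\cdot)\right]\frac{\partial^2 f}{\partial y^j\partial y^k}(x_t) + (\text{first order}) = \frac{1}{2}\sum_{j,k}a^{jk}(t,x_t)\frac{\partial^2 f}{\partial y^j\partial y^k}(x_t) + (\text{first order}).
\end{equation*}
Since the second-order coefficients of $f\in\D$ at a point can be chosen freely, matching the pure second-order part forces $a^{jk}(t,x_t) = E^{\mu_{|[0,t],x}}[c^{jk}(t,\cdot)]$ for $\mu$ a.e.\ $x$ and a.e.\ $t$; one then needs a martingale/backward-convergence argument (à la the Revuz--Yor Corollary 2.4 already invoked in Corollary \ref{cor:last}) to replace $E^{\mu_{|[0,t],x}}[c^{jk}(t,\cdot)]$ by $c^{jk}(t,x)$ itself, giving \eqref{eq:aequalc}.

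Next I would identify $h^\mu$. Comparing first-order terms in the same equated-derivatives identity, and using \eqref{eq:aequalc} so that $\mathcal{L}_t$ and $\mathcal{M}_t$ have the same second-order part, the first-order coefficient of $\mathcal{K}^\mu$ must match that of $\mathcal{M}_t$: that is, $b^j(t,x_t) + h^{\mu,j}_{t,x}(t,x_t) = e^j(t,x)$, so $h^{\mu,j}_{s,x}(t,x_t) = E^{\mu_{|[0,s],x;t,x_t}}[e^j(t,\cdot) - b^j(t,\cdot)]$ (here I use $b^j(t,\cdot)$ evaluated at the endpoint, which is deterministic given the conditioning). Thus $h^\mu(t,w) := e(t,w) - b(t,w_t)$ is a progressively measurable random variable satisfying exactly the hypothesis of Corollary \ref{cor:last}: $h^\mu_{s,x}(t,x_t) = E^{\mu_{|[0,s],x;t,x_t}(w)}h^\mu(t,w)$. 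One should check membership $h^\mu_{s,x} \in L^2_{s,T,\nabla}(\mu_{|[0,s],x})$, which follows from \eqref{eq:hmuratefunction} and the finiteness of $R(\mu||P)$.

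Finally I would conclude. Corollary \ref{cor:last} then yields directly
\begin{equation*}
R(\mu||P) = R_0(\mu||P) + \frac{1}{2}\int_0^T E^{\mu(w)}\left[\left|e(t,w) - b(t,w_t)\right|_{t,w_t}^2\right]dt,
\end{equation*}
and since $R_0(\mu||P) = R_{\F_0}(\mu||P) = R(\nu||\mu_I)$ (the two initial laws), this is \eqref{eq:lastidentity} after renaming the integration variable $t \mapsto s$. The main obstacle I anticipate is the rigour of the "equate the distributional derivatives and read off coefficients" step: one has the identity $\langle \dot{\mu}_{t|[0,s],x}, f\rangle = \langle {}^*\mathcal{K}^\mu_{t|s,x}\mu_{t|[0,s],x}, f\rangle$ only for Lebesgue a.e.\ $t$ and $\mu$ a.e.\ $x$, and one needs to pass $s\to t^-$, interchange this limit with the $\mu_{t|\cdots}$-expectation, and handle the null sets uniformly enough in $f$ (using that $\D$ is separable, so a countable dense family of test functions suffices) before one can localize and extract the pointwise coefficient identities \eqref{eq:aequalc} and the formula for $h^\mu$. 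The backward-martingale convergence \eqref{eq:revuz}-type argument is the delicate ingredient that makes this legitimate.
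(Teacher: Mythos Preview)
Your proposal is correct and follows essentially the same route as the paper: equate the two expressions \eqref{eq:temp10}--\eqref{eq:temp11} and \eqref{eq:Mderivative} for $\dot{\mu}_{t|[0,s],x}$, read off second- and first-order coefficients to obtain $a(t,x_t)=E^{\mu_{|[0,s],x;t,x_t}}[c(t,\cdot)]$ and $h^{\mu}_{s,x}(t,x_t)=E^{\mu_{|[0,s],x;t,x_t}}[e(t,\cdot)-b(t,\cdot)]$, pass $s\to t^{-}$ along a countable dense set via the backward-martingale convergence of \cite[Corollary 2.4]{revuz-yor:91}, and then apply Corollary~\ref{cor:last}. The only cosmetic wrinkle is the order in which you present the limit and the coefficient-matching: the paper first matches coefficients at fixed $s$ (which is where the extra conditioning on $x_t$ enters) and only then lets $s\to t^{-}$, whereas you take the limit first and afterwards invoke backward convergence on an expectation that has already collapsed; reordering these two steps removes the apparent redundancy you flagged as the ``main obstacle.''
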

\begin{proof}
It follows from $R(\mu||P) < \infty$, \eqref{eq:temp10} and \eqref{eq:temp11} that for all $s$ and $\mu$ a.e. $x$, for Lebesgue a.e. $t\geq s$
\begin{equation}
E^{\mu_{|s,x;t,x_t}}[c(t,\cdot)] = a(t,x_t).
\end{equation}
Let us take a countable dense subset $\mathcal{Q}_{0,T}$ of $[0,T]$. There thus exists a null set $\mathcal{N}\subseteq[0,T]$ such that for every $s\in\mathcal{Q}_{0,T}$, $\mu$ a.e. $x$ and every $t\notin \mathcal{N}$ the above equation holds. We may therefore conclude \eqref{eq:aequalc} using \cite[Corollary 2.4]{revuz-yor:91} and taking $s\to t^-$. From \eqref{eq:Mderivative}, we observe that for all $s\in [0,T]$ and $\mu$ a.e. $x$, for Lebesgue a.e. $t$
\begin{equation*}
h^\mu_{s,x}(t,x_t) = E^{\mu_{|[0,s],x;t,x_t}}[e(t,\cdot)].
\end{equation*}
Equation \eqref{eq:lastidentity} thus follows from Corollary \ref{cor:last}.
\end{proof}
\subsection{Comparison of our Results to those of Fischer \textit{et al}  \cite{budhiraja-dupuis-etal:12,fischer:12}}

We have already noted in the introduction that one may infer a variational representation of the relative entropy from  \cite{budhiraja-dupuis-etal:12,fischer:12} by assuming that the coefficients of the underlying stochastic process are independent of the empirical measure in these papers. The assumptions in \cite{fischer:12} on the underlying process $P$ are both more general and more restrictive than ours. His assumptions are more general insofar as the coefficients of the SDE may depend on the past history of the process and the diffusion coefficient is allowed to be degenerate. However our assumptions are more general insofar as we only require $P$ to be the unique (in the sense of probability law) weak solution of the SDE, whereas \cite{fischer:12} requires $P$ to be the unique strong solution of the SDE. Of course when both sets of assumptions are satisfied, one may infer that the expressions for the relative entropy are identical. 
\section{Appendix}
\subsection{Proof of Theorem \ref{thm:DG}}\label{sect:DGproof}
The fact that, if $\alpha\subseteq\sigma$, then $R_\alpha(\mu||P) \leq R_\sigma(\mu||P)$, follows from Lemma \ref{lem:DonskVara}. We prove the first expression \eqref{eq:projlimit1} in the case $s=0$, $t=T$ (the proof of the second identity \eqref{eq:projlimit2} is analogous). We employ Large Deviations Theory to do this. 
\begin{definition}
A series of probability laws $\Gamma^N$ on some topological space $\Omega$ equipped with its Borelian $\sigma$-algebra is said to satisfy a strong Large Deviation Principle with rate function $I:\Omega\to\R$ if for all open sets $O$,
\begin{equation*}
\linf{N}N^{-1}\log \Gamma^N(O) \geq - \inf_{x\in O} I(x)
\end{equation*}
and for all closed sets $F$
\begin{equation*}
\lsup{N}N^{-1}\log\Gamma^N(F) \leq -\inf_{x\in F} I(x).
\end{equation*}
If furthermore the set $\lbrace x: I(x) \leq \alpha\rbrace$ is compact for all $\alpha\geq 0$, we say that $I$ is a good rate function.
\end{definition}
We define the following empirical measures.
\begin{definition}
For $x\in \T^N, y\in \T^N_\sigma$, let 
\begin{align*}
\hat{\mu}^N(x) = \frac{1}{N}\sum_{1\leq j\leq N} \delta_{x^j} \in \M(\T), \;\;\;\;
\hat{\mu}_\sigma^N(y) = \frac{1}{N}\sum_{1\leq j\leq N} \delta_{y^j} \in \M(\T_\sigma).
\end{align*}
\end{definition}
Clearly $\hat{\mu}_\sigma^N(x_\sigma)  = \pi_\sigma(\hat{\mu}^N(x))$.  The image law $P^{\otimes N}\circ (\hat{\mu}^N)^{-1}$ is denoted by $\Pi^N_{s,t} \in \M(\M(\T))$.  Similarly, for $\sigma\in\J$, the image law of $P^{\otimes N}_\sigma \circ (\hat{\mu}_\sigma^N)^{-1}$ on $\M(\T_\sigma)$ is denoted by $\Pi^N_\sigma \in  \M(\M(\T_{\sigma}))$. Since $\T$ and $\T_\sigma$ are Polish spaces, we have by Sanov's Theorem (see \cite[Theorem 6.2.10]{dembo-zeitouni:97}) that
$\Pi^N$ satisfies a strong Large Deviation Principle with good rate function $R(\cdot ||P)$. Similarly, $\Pi^N_\sigma$ satisfies a strong Large Deviation Principle on $\M(\T_\sigma)$ with good rate function $R_{\F_\sigma}(\cdot ||P)$.

We now define the projective limit $\undt{M}(\T)$. If $\alpha,\gamma\in\J$, $\alpha \subset \gamma$, then we may define the projection $\pi^\M_{\alpha\gamma}:\M(\T_\gamma) \to \M(\T_\alpha)$ as $\pi^{\M}_{\alpha\gamma}(\xi) := \xi\circ\pi_{\alpha\gamma}^{-1}$. An element of $\undt{\M}(\T)$ is then a member $\otimes_\sigma\zeta(\sigma)$ of the cartesian product $\otimes_{\sigma\in\J}\M(\T_\sigma)$ satisfying the consistency condition $\pi^{\M}_{\alpha\gamma}(\zeta(\gamma)) = \zeta(\alpha)$ for all $\alpha\subset\gamma$. The topology on $\undt{\M}(\T)$ is the minimal topology necessary for the natural projection $\undt{\M}(\T) \to \M(\T_\alpha)$ to be continuous for all $\alpha\in\J$. That is, it is generated by open sets of the form
\begin{equation}
A_{\gamma,O} = \lbrace \otimes_\sigma \zeta(\sigma) \in \undt{\M}(\T): \zeta(\gamma) \in O \rbrace,\label{eq:generatingsets}
\end{equation}
for some $\gamma\in\J$ and  open $O$ (with respect to the weak topology of $\M(\T_\gamma)$). 

We may continuously embed $\M(\T)$ into the projective limit $\undt{\M}(\T)$ of its marginals, letting $\iota$ denote this embedding. That is, for any $\sigma\in\J$, $(\iota(\mu))(\sigma) = \mu_{\sigma}$.  We note that $\iota$ is continuous because $\iota^{-1}(A_{\gamma,O})$ is open in $\M(\T)$, for all $A_{\gamma,O}$ of the form in \eqref{eq:generatingsets}. We equip $\undt{\M}(\T)$ with the Borelian $\sigma$-algebra generated by this topology. The embedding $\iota$ is measurable with respect to this $\sigma$-algebra because the topology of $\M(\T)$ has a countable base. The embedding induces the image laws $(\Pi^N\circ\iota^{-1})$ on $\M(\undt{\M}(\T))$. For $\sigma\in\J$, it may be seen that $\Pi^N_\sigma = \Pi^N\circ\iota^{-1}\circ(\pi^\M_\sigma)^{-1} \in \M(\M(\T_\sigma))$, where $\pi^{\M}_\sigma( \otimes_\alpha \mu(\alpha)) = \mu(\sigma)$.

It follows from \cite[Thm 3.3]{dawson-gartner:87} that $\Pi^N\circ\iota^{-1}$ satisfies a Large Deviation Principle with rate function 
 $\sup_{\sigma\in\J} R_\sigma(\mu||P).$ However we note that $\iota$ is $1-1$, because any two measures $\mu,\nu\in \M(\T)$ such that $\mu_\sigma = \nu_\sigma$ for all $\sigma\in\J$ must be equal. Furthermore $\iota$ is continuous.  Because of Sanov's Theorem , $(\Pi^N)$ is exponentially tight (see \cite[Defn 1.2.17, Exercise 1.2.19]{dembo-zeitouni:97} for a definition of exponential tightness and proof of this statement). These facts mean that we may apply the Inverse Contraction Principle \cite[Thm 4.2.4]{dembo-zeitouni:97}  to infer $\Pi^N$ satisfies an LDP with the rate function $\sup_{\sigma\in\J} R_{\sigma}(\mu||P)$. Since rate functions are unique \cite[Lemma 4.1.4]{dembo-zeitouni:97}, we obtain the first identity in conjunction with Sanov's Theorem. The second identity \eqref{eq:projlimit2} follows similarly. 
 
We may repeat the argument above, while restricting to $\sigma\subset\mathcal{Q}_{s,t}$. We obtain the same conclusion because the $\sigma$-algebra generated by $(\mathcal{F}_{\sigma})_{\sigma\subset\mathcal{Q}_{s,t}}$, is the same as $\F_{s,t}$. The last identity follows from the fact that, if $\alpha\subseteq\sigma$, then $R_\alpha(\mu||P) \leq R_\sigma(\mu||P)$.

\bibliographystyle{plain}
\bibliography{odyssee}

\begin{thebibliography}{10}

\bibitem{baladron-fasoli-etal:11}
Javier Baladron, Diego Fasoli, Olivier Faugeras, and Jonathan Touboul.
\newblock Mean field description of and propagation of chaos in recurrent
  multipopulation networks of hodgkin-huxley and fitzhugh-nagumo neurons.
\newblock Technical report, arXiv, 2011.
\newblock Submitted to the Journal of Mathematical Neuroscience.

\bibitem{ben-arous-guionnet:95}
G.~Ben-Arous and A.~Guionnet.
\newblock Large deviations for langevin spin glass dynamics.
\newblock {\em Probability Theory and Related Fields}, 102(4):455--509, 1995.

\bibitem{boue-dupuis:98}
Michelle Boue and Paul Dupuis.
\newblock A variational representation for certain functionals of brownian
  motion.
\newblock {\em The Annals of Probability}, 26(4):1641--1659, 1998.

\bibitem{budhiraja-dupuis-etal:12}
A.~Budhiraja, P.~Dupuis, and Fischer M.
\newblock Large deviation properties of weakly interacting processes via weak
  convergence methods.
\newblock {\em Annals of Probability}, 40(1):74--102, 2012.

\bibitem{dawson-gartner:87}
Donald Dawson and Jurgen Gartner.
\newblock Large deviations from the mckean-vlasov limit for weakly interacting
  diffusions.
\newblock {\em Stochastics}, 20, 1987.

\bibitem{dembo-zeitouni:97}
A.~Dembo and O.~Zeitouni.
\newblock {\em Large deviations techniques}.
\newblock Springer, 1997.
\newblock 2nd Edition.

\bibitem{desvillettes-villani:01}
L.~Desvillettes and C.~Villani.
\newblock On the trend to global equilibrium in spatially inhomogeneous
  entropy-dissipating systems. part 1: The linear fokker-planck equation.
\newblock {\em Communications in Pure and Applied Mathematics}, 54(1), 2001.

\bibitem{donsker-varadhan:83}
M.D. Donsker and S.R.S Varadhan.
\newblock Asymptotic evaluation of certain markov process expectations for
  large time, iv.
\newblock {\em Communications on Pure and Applied Mathematics}, XXXVI:183--212,
  1983.

\bibitem{dupuis-ellis:97}
Paul Dupuis and Richard~S. Ellis.
\newblock {\em A Weak Convergence Approach to the Theory of Large Deviations}.
\newblock {John Wiley \& Sons}, 1997.

\bibitem{faugeras-maclaurin:13b}
Olivier Faugeras and James MacLaurin.
\newblock A large deviation principle for networks of rate neurons with
  correlated synaptic weights.
\newblock Technical report, INRIA, February 2013.

\bibitem{fischer:12}
Markus Fischer.
\newblock On the form of the large deviation rate function for the empirical
  measures of weakly interacting systems.
\newblock {\em Archiv}, 2012.

\bibitem{freidlin-wentzell:98}
M.I. Freidlin and A.D. Wentzell.
\newblock {\em Random perturbations of dynamical systems}, volume 260.
\newblock Springer Verlag, 1998.

\bibitem{fritelli:00}
Marco Fritelli.
\newblock The minimal entropy martingale measure and the valuation problem in
  incomplete markets.
\newblock {\em Mathematical finance}, 10(1):39--52, 2000.

\bibitem{georgiou-lindquist:03}
Tryphon~T. Georgiou and Anders Lindquist.
\newblock Kullback-leibler approximation of spectral density functions.
\newblock In {\em Proceedings of the 42nd IEEE Conference on Decision and
  Control, Maui Hawaii USA}, 2003.

\bibitem{grandits-rheinlander:02}
Peter Grandits and Thorsten Rheinlander.
\newblock On the minimal entropy martingale measure.
\newblock {\em The Annals of Probability}, 2002.

\bibitem{josselin-garnier-yang:13}
George~Papanicolaou Josselin~Garnier and Tzu-Wei Yang.
\newblock Large deviations for a mean field model of systemic risk.
\newblock {\em SIAM Journal of Financial Mathematics}, 4(1):151--184, 2013.

\bibitem{karatzas-shreve:91}
Ioannis Karatzas and Steven~E. Shreve.
\newblock {\em Brownian motion and stochastic calculus}, volume 113 of {\em
  Graduate Texts in Mathematics}.
\newblock Springer-Verlag, New York, second edition, 1991.

\bibitem{lassalle:12}
Remi Lassalle.
\newblock Invertibility of adapted perturbations of the identity on abstract
  wiener space.
\newblock {\em Journal of Functional Analysis}, 262:2734--2776, 2012.

\bibitem{moynot-samuelides:02}
O.~Moynot and M.~Samuelides.
\newblock {Large deviations and mean-field theory for asymmetric random
  recurrent neural networks}.
\newblock {\em Probability Theory and Related Fields}, 123(1):41--75, 2002.

\bibitem{plastino-miller-etal:97}
A.R. Plastino, H.G. Miller, and A.~Plastino.
\newblock Minimum kullback entropy approach to the fokker-planck equation.
\newblock {\em Physical Review E}, 56(4):3927--3934, 1997.

\bibitem{revuz-yor:91}
Daniel Revuz and Marc Yor.
\newblock {\em Continuous Martingales and Brownian Motion}.
\newblock Springer-Verlag, 2 edition, 1991.

\bibitem{ustunel:09}
Ali~Suleyman Ustunel.
\newblock Entropy, invertibility and variational calculus of the adapted shifts
  on wiener space.
\newblock {\em Journal of Functional Analysis}, 257:3655--3689, 2009.

\bibitem{yu-mehta:09}
Sun Yu and Prashant Mehta.
\newblock The kullback-leibler rate metric for comparing dynamical systems.
\newblock In {\em Joint 48th IEEE Conference on Decision and Control and 28th
  Chinese Control Conference, Shanghai}, 2009.

\end{thebibliography}
\end{document}